\newcommand{\qed}{\hfill\rule{2.1mm}{2.1mm}}
\newtheorem{theorem}{Theorem}[section]
\newtheorem{definition}{Definition}[section]
\newtheorem{lemma}{Lemma}[section]
 \DeclareMathOperator{\Diag}{{Diag}}
 \DeclareMathOperator{\rank}{{rank}}
 \DeclareMathOperator{\tr}{{Tr}}
 \DeclareMathOperator{\st}{{s.t.}}
 \newcommand{\A}{\bs{A}}
 \newcommand{\E}{\mathbb{E}}
 \newcommand{\R}{\mathbf{R}}
 \newcommand{\bs}{\boldsymbol}
 \renewcommand{\d}{\bs{d}}
 \newcommand{\e}{\bs{e}}
 \newcommand{\I}{\bs{I}}
 \renewcommand{\v}{\bs{v}}
 \newcommand{\W}{\bs{W}}
 \newcommand{\x}{\bs{x}}
 \newcommand{\X}{\bs{X}}
 \newcommand{\y}{\bs{y}}
 \newcommand{\Y}{\bs{Y}}
 \newcommand{\z}{\bs{z}}
 \newcommand{\0}{\bs{0}}
 \newcommand{\mat}[1] {\ensuremath{ \left(\begin{array} #1 \end{array} \right)}} 
 \newcommand{\branchdef}[1] {\ensuremath{ \left \{\begin{array}{rl} #1 \end{array} \right. }} 
 \newcommand{\sbra}[1] {\ensuremath{ \left[ #1\right]}} 
 \newcommand{\rbra}[1]{\ensuremath{\left( #1 \right)}} 
 \newcommand{\bra}[1]{\ensuremath{\left\{ #1 \right\}}} 
 \newcommand{\ra}{\rightarrow}
\begin{document}
\allowdisplaybreaks

\title{Convex optimization for the densest subgraph and densest submatrix problems}


\author{Polina Bombina \thanks{Department of Mathematics, University of Alabama,
 Tuscaloosa, Alabama,
 \href{mailto:pbombina@crimson.ua.edu}{pbombina@crimson.ua.edu}}
 \and  Brendan Ames \thanks{University of Alabama,
  Tuscaloosa, Alabama,
    \href{mailto:bpames@ua.edu}{bpames@ua.edu}}
  }



\maketitle

\begin{abstract}
We consider the densest $k$-subgraph problem, which seeks to identify the $k$-node subgraph of a given input graph with maximum number of edges. This problem is well-known to be NP-hard, by reduction to the maximum clique problem. We propose a new convex relaxation for the densest $k$-subgraph problem, based on a nuclear norm relaxation of a low-rank plus sparse decomposition of the adjacency matrices of $k$-node subgraphs to partially address this intractability. We establish that the densest $k$-subgraph can be recovered with high probability from the optimal solution of this convex relaxation if the input graph is randomly sampled from a distribution of random graphs constructed to contain an especially dense $k$-node subgraph with high probability. Specifically, the relaxation is exact when the edges of the input graph are added independently at random, with edges within a particular $k$-node subgraph added with higher probability than other edges in the graph. We provide a sufficient condition on the size of this subgraph $k$ and the expected density under which the optimal solution of the proposed relaxation recovers this $k$-node subgraph with high probability.
Further, we propose a first-order method for solving this relaxation based on the alternating direction method of multipliers, and empirically confirm our predicted recovery thresholds using simulations involving randomly generated graphs, as well as graphs drawn from social and collaborative networks.
\end{abstract}

\section{Introduction}
\label{sec:intro}

We consider the \emph{densest $k$-subgraph problem:} given graph $G=(V,E)$, identify the
$k$-node subgraph of $G$ of maximum density, i.e., maximum average degree.
Equivalently, the problem reduces to finding the $k$-node subgraph of $G$ with maximum number of edges.
It is easy to see that the densest $k$-subgraph problem is NP-hard by reduction to the maximum clique problem, well-known to be NP-hard~\cite{karp1972reducibility}.
Indeed, if $G$ contains a clique of size $k$, it would induce the densest $k$-subgraph of $G$; any polynomial time algorithm
for densest $k$-subgraph would immediately provide a polynomial-time algorithm for maximum clique. Moreover, it has been shown by \cite{alon2011inapproximability,feige2002relations,khot2006ruling} that the densest $k$-subgraph problem does not admit polynomial-time approximation schemes in general.
Despite this intractability, the identification of dense subgraphs plays a significant role in many practical applications, especially in the analysis of web graphs, and social and biological
networks~\cite{henzinger2002challenges,  gibson2005discovering,angel2012dense,gajewar2012multi,tsourakakis2015k,tsourakakis2013denser}.

We propose a new convex relaxation for the densest $k$-subgraph problem to address this intractability.
Although we do not, and should not, expect our algorithm to provide a good approximation of the densest $k$-subgraph
for all graphs,
we will show that it is functionally equivalent to the densest $k$-subgraph problem for a large class of program instances.
In particular, suppose that the random  input graph consists of a $k$-node subgraph $H$ with edges added
with significantly higher probability than those edges outside $H$. We will show that if $k$ is sufficiently large then
$H$ is the densest $k$-subgraph of $G$, and it can be recovered from the optimal solution of our convex relaxation.

This result can be thought of as a specialization of recent developments regarding the recovery of clusters in graphs.
In graph clustering, one seeks to partition the nodes of a given graph into dense subgraphs. Several recent results \cite{abbe2016exact,ailon2013breaking,ames2014convex, ames2014guaranteed,amini2014semidefinite,
cai2015robust,chen2014clustering,chen2014improved,chen2014statistical,guedon2015community,hajek2015achieving,lei2015consistency,mathieu2010correlation,nellore2013recovery,oymak2011finding,rohe2011spectral,qin2013regularized, vinayak2014sharp}, among others, have established sufficient conditions on the generative model under which dense subgraphs can be recovered in a random graph, typically from the solution of some convex relaxation. These results assume that the random graph is generated using some generalization of the stochastic block model (see~\cite{holland1983stochastic}), which assumes that the edges are added within blocks or clusters with higher frequency than between blocks, and provide sufficient conditions on the number and relative sizes of clusters, and the probabilities of adding edges that guarantee that the underlying block structure can be recovered in polynomial-time. The recent survey article~\cite{li2018convex} provides an overview of such recovery guarantees.

Relatively few analogous results exist for the densest $k$-subgraph problem.
Ames and Vavasis~\cite{ames2011nuclear, ames2015guaranteed} consider convex relaxations for the maximum clique problem. Given an input graph, the \emph{maximum clique problem} aims to identify the largest clique in the graph, that is, the  vertex set of the largest complete subgraph (see \cite{bomze1999maximum,pardalos1994maximum} for further discussion of the maximum clique problem). Ames and Vavasis \cite{ames2011nuclear,ames2015guaranteed} establish that the maximum clique can be recovered from the optimal solution of particular convex relaxation if the input graph consists of a single large complete graph that is obscured by noise in the form of random edge additions and deletions. In particular, both results show that hidden cliques of size at least $\Omega(\sqrt{n})$ can be identified with high probability for $n$-node random graphs constructed so that the probability of adding an edge between nodes in the hidden clique is significantly higher than adding other potential edges to the graph.
The notation $f(x) = \Omega(g(x))$ indicates that there is some constant $C$ such that $f(x) \ge C g(x)$ for all sufficiently large $x$ and we say that an event occurs \emph{with high probability} if the probability of the event tends to $1$ polynomially as the size of the graph $N$ tends to $\infty$.
The latter result recasts the hidden clique problem as that of finding the densest $k$-subgraph, where $k$ is the size of the hidden clique. Similar theoretical recovery guarantees can be found in~\cite{deshpande2015finding, montanari2015finding, hajek2017information,hajek2016semidefinite,saad2018belief,hajek2018recovering}.
We delay the derivation of our convex relaxation and statement of the general recovery guarantee until Section~\ref{sec:dks}.

We generalize these results for the  densest subgraph problem to obtain an analogous recovery guarantee for the \emph{densest submatrix problem}, which seeks to find the densest submatrix of given size. That is, we seek the submatrix of desired size with maximum number of nonzero entries. Similar results for the submatrix localization problem, where one seeks to find a block of entries with elevated mean in a random matrix, were presented in~\cite{hajek2017submatrix, banks2018information, brennan2019universality}.
We will see that our convex relaxation correctly identifies the densest submatrix (of fixed size) in random matrices provided that entries within this submatrix are significantly more likely to be nonzero than an arbitrary entry of the matrix. We present our generalization of the densest subgraph problem to the densest submatrix problem and the statement of our theoretical recovery guarantees in Section~~\ref{sec:DSM}. We provide proofs of our main results in Section~\ref{sec:DSM-proof} and conclude with discussion of a first-order method for solving our convex relaxations and empirical results illustrating efficacy of our approach in Section~\ref{sec:expts}.

\section{Relaxation of the Densest $k$-Subgraph Problem and Perfect Recovery of a Planted Clique}
\label{sec:dks}

Our relaxation hinges on the observation, made in~\cite{ames2015guaranteed}, that the  adjacency matrix
of \emph{any} subgraph of $G$ can represented as the difference of a rank-one matrix and a  binary correction
matrix; this observation is closely related to the sparse plus low-rank decomposition of clustered graphs
first considered in~\cite{oymak2011finding, chen2012clustering}, although with the restriction to submatrices of the adjacency matrix.
Let $\hat V \subseteq V$ be a subset of nodes of $G = (V,E)$. We denote by $G(\hat V)$ the subgraph induced by $\hat V$; that is,
$G(\hat V)$ is the graph with node set $\hat V$ and edge set given by the subset of $E$ with both endpoints in $\hat V$.
Let $\v \in \R^V$ be the characteristic vector of $\hat V$:
$v_i = 1$ if $i\in \hat{V}$ and $v_i = 0$ otherwise for all $i \in V$.
The matrix $\hat \X = \v\v^T$ is a rank-one binary matrix with nonzero entries indexed by $\hat V \times \hat V$.
If $G(\hat V)$ is a complete subgraph, i.e., $ij \in E$ for all $i,j \in \hat V$,
then $\hat \X$ is equal to the sum of the adjacency matrix of $G(\hat V)$ and the binary diagonal matrix $I_{\hat V}$
with nonzero entries indexed by $\hat V$;
we call this sum the perturbed adjacency matrix of $G(\hat V)$, and denote it by $\bs {\tilde A_{G(\hat V)}}$.

If $G(\hat V)$ is not a complete subgraph, then there is some $(i,j) \in V \times V$, $i\neq j$
such that $ij \notin E$.
Let $\Omega$ denote the set of all such $(i,j)$. For each $(i,j) \in \Omega$, we have $\hat X_{ij} =1$, while ${[ \bs {\tilde A_{G(\hat V)}}]}_{ij} = 0$.
It follows that
$
	 \bs {\tilde A_{G(\hat V)}} = \hat \X - P_{\Omega}(\hat \X),
$
where $P_{\Omega}$ is the projection onto the set of matrices having support contained in $\Omega$, defined by
$$
	[P_\Omega(\bs M)] = \branchdef{ M_{ij},& \mbox{if  } (i,j) \in \Omega \\ 0, &\mbox{otherwise.}}
$$
We call $(\hat\X, \hat\Y) := (\hat\X, P_\Omega(\hat\X))$ the \emph{matrix representation} of the subgraph $G(\hat V)$.
The density of $G(\hat V)$ is given by
$$
	d(G(\hat V)) = \frac{1}{k} \rbra{ {k\choose 2} - \frac{1}{2} \| P_\Omega(\hat\X)\|_0 },
$$
where $\|\bs M \|_0$ denotes the number of nonzero entries of $\bs M$, because $\|P_\Omega(\hat\X)\|_0$ is equal to twice the number of
nonadjacent nodes in $\hat V$.
Moreover, the entries of the correction matrix $P_\Omega(\hat \X)$ are binary, which implies that $\|P_\Omega(\hat \X)\|_0 = \sum_{i,j\in V} {[P_\Omega(\hat\X)]}_{ij}$.
Therefore, we may pose the densest $k$-subgraph problem
as  the rank-constrained binary program
\begin{equation}\label{eq: dks 1}
  \begin{array}{ll}
    \min & \tr(\Y \e\e^T)  \\
    \st & \tr(\X \e\e^T) \ge k^2, \; P_\Omega(\X - \Y) = \0,\;  \rank(\X) = 1 \\
      & \X = \X^T, \; \Y = \Y^T,\; \X \in {\{0,1\}}^{V\times V}, \; \Y \in{ {\{0,1\}}^{V\times V}}.
  \end{array}
\end{equation}
where $\tr: \R^{n\times n} \ra \R^n$ denotes the matrix trace function, and $\e$ denotes the all-ones vector in $\R^V$.
Unfortunately, combinatorial optimization problems involving rank and binary constraints are intractable in general.
In particular, the densest $k$-subgraph problem is NP-hard and, hence, we cannot expect to be able to solve~\eqref{eq: dks 1} efficiently.
Relaxing the rank constraint with a nuclear norm penalty term given by
$\|\X \|_* = \sum_{i=1}^N \sigma_i(\X)$ as in~\cite{recht2010guaranteed},
the binary constraints with box constraints,
and ignoring the symmetry constraints yields
the convex program
\begin{equation}\label{eq: dks rel}
  \begin{array}{ll}
    \min & \|\X \|_* + \gamma \tr(\Y \e\e^T) \\
  	\st & \tr(\X \e\e^T) = k^2, \;P_\Omega(\X - \Y) = \0,\;\0 \le \X \le \e\e^T,\; \0 \le \Y, 
  \end{array}
\end{equation}
where $\gamma > 0$ is a regularization parameter chosen to control emphasis between
the two objectives.

As mentioned earlier, we do not expect the solution of~\eqref{eq: dks rel}
to give a good approximation of the densest $k$-subgraph for an arbitrary graph.
We instead restrict our focus to those graphs which we can expect to to contain a single especially dense $k$-subgraph with high probability.

\begin{definition}
	We construct the edge set of a $N$-node random graph $G = (V,E)$ as follows.
	Let $V^* \subseteq V$ be a $k$-subset of nodes;
	for each $(i,j) \in V^* \times V^*$, we add $ij$ to $E$ independently with probability $q$.
	For each $(i,j) \in (V\times V) - (V^*\times V^*)$, we add $ij$ to $E$ independently
	with probability $p < q$.
	We say such a graph $G$ is sampled from the
  \emph{planted dense $k$-subgraph model}.
\end{definition}

This model, considered in~\cite{ames2015guaranteed}, is a generalization of the planted clique model considered in~\cite{ames2011nuclear}, where $q$ is chosen to be $q = 1$.
On the other hand, the planted dense $k$-subgraph model is a special case of the generalized stochastic block model~\cite{chen2012clustering},
corresponding to a graph with exactly one cluster of size $k$ and $N-k$ outlier nodes.
Note that any graph $G$ sampled from the planted dense $k$-subgraph contains a $k$-subgraph, $G(V^*)$, with higher density  than the rest of the graph in expectation.
Our goal is to derive conditions on the size $k$ of this subgraph and the edge densities $p$ and $q$ that ensure recovery of the planted subgraph $G(V^*)$ from the optimal solution of~\eqref{eq: dks rel}. The following theorem provides such a sufficient condition.

\begin{theorem}\label{thm: rec}
	Suppose that the $N$-node graph $G = (V,E)$ is sampled from the planted dense $k$-subgraph model
  with edge probabilities $q$ and $p$ respectively.
  Let $(\X^*, \Y^*)$ denote the matrix representation of the planted dense $k$-subgraph $G(V^*)$.
  Then constants $c_1, c_2, c_3> 0$ exist such that if
  \begin{equation}\label{eq: gap 1}
    q-p \ge c_1 \max \bra{ \sqrt{ \max\{\sigma_q^2, \sigma_p^2\} \frac{ \log N}{k} },  \frac{\log N}{k}\sqrt{ \sigma_p^2 N} , \frac{(\log N)^{3/2}}{k} }
  \end{equation}
  then $(\X^*, \Y^*)$ is the unique optimal solution of~\eqref{eq: dks rel}
  for penalty parameter
  \begin{equation} \label{eq: gamma choice}
			\gamma \in \rbra{ \frac{c_2}{(q-p)k}, \frac{c_3}{(q-p)k} },
		\end{equation}
	and $G(V^*)$ is the unique densest $k$-subgraph of $G$ with high probability; here $\sigma_q^2$ and $\sigma_p^2$ are equal to
  the edge creation variances $q(1-q)$ and $p(1-p)$ inside and outside of the planted dense $k$-subgraph, respectively.
\end{theorem}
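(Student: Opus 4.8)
\ The plan is to certify optimality of $(\X^*,\Y^*)$ for the convex relaxation~\eqref{eq: dks rel} by producing a dual certificate, and then to upgrade the argument to uniqueness. Since~\eqref{eq: dks rel} is convex and has a strictly feasible point, strong duality and the KKT conditions hold, so $(\X^*,\Y^*)$ is optimal as soon as there exist a scalar $\lambda$ (for $\tr(\X\e\e^T)=k^2$), a matrix $\bs\mu$ supported on $\Omega$ (for $P_\Omega(\X-\Y)=\0$), nonnegative matrices $\bs\Phi$, $\bs\Psi$ (for $\X\le\e\e^T$ and $\X\ge\0$), and a nonnegative matrix $\bs\eta$ (for $\Y\ge\0$) satisfying the corresponding stationarity and complementary slackness relations. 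Because $\X^*=\v\v^T$ has the single nonzero singular value $k$ with left and right singular vector $\v/\sqrt k$, the subdifferential of the nuclear norm at $\X^*$ consists of the matrices $\v\v^T/k+\W$ with $\W\v=\0$, $\v^T\W=\0$, and $\|\W\|\le 1$. Stationarity in $\Y$ forces $\bs\eta=\gamma\e\e^T-\bs\mu$ on $\Omega$ and $\eta_{ij}=\gamma>0$ off $\Omega$, which together with complementary slackness reproduces $\Y^*=P_\Omega(\X^*)$; stationarity in $\X$ reads $\v\v^T/k+\W+\lambda\e\e^T+\bs\mu+\bs\Phi-\bs\Psi=\0$, and complementary slackness confines $\bs\Phi$ to $V^*\times V^*$ and $\bs\Psi$ to its complement.

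Constructing the certificate then amounts to a bookkeeping problem for $\W$. On $V^*\times V^*$ the multiplier $\bs\mu$ is pinned to $\gamma$ on the non-edges of $G(V^*)$ and to $0$ on its edges, so the $\X$-stationarity equation reads $W_{ij}=-1/k-\lambda-\gamma\,\mathbbm{1}[ij\notin E]-\Phi_{ij}$ there, while off $V^*\times V^*$ it reads $W_{ij}=-\lambda-\mu_{ij}+\Psi_{ij}$, with $\mu_{ij}=0$ on edges and only $\mu_{ij}\le\gamma$ imposed on non-edges. One uses the slack in $\bs\Phi\ge\0$, $\bs\Psi\ge\0$, and $\bs\mu$ to pick $\lambda$ near $-(1+\gamma(1-q)k)/k$ and to arrange that $\bar\W:=\E\W$ is a small correction along $\v$ (so $\|\bar\W\|$ is controlled); a genuine rank-one correction along $\v$ is moreover needed so that the rows and columns of $\W$ indexed by $V^*$ sum to zero simultaneously. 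These choices keep $\bs\Phi,\bs\Psi,\bs\eta\ge\0$, and the exterior rows and columns of $\W$ can also be driven to zero sum, precisely when the constant multiplying $1/((q-p)k)$ in $\gamma$ lies in a bounded interval; this is the origin of~\eqref{eq: gamma choice}. After the construction, $\W=\bar\W+(\W-\bar\W)$ with $\W-\bar\W$ a zero-mean random matrix whose entries are independent (up to symmetry) multiples of order $\gamma\asymp 1/((q-p)k)$ of the centered edge indicators and vertex-degree fluctuations.

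The bulk of the work, and the step I expect to be the main obstacle, is verifying $\|\W\|\le 1$ and the sign conditions $\bs\Phi,\bs\Psi,\bs\eta\ge\0$ with high probability. For the spectral bound I would estimate $\|\bar\W\|$ directly and bound $\|\W-\bar\W\|$ by a matrix Bernstein inequality applied block by block: the fluctuations in the $V^*\times V^*$ block contribute a term of order $\gamma\sqrt{\max\{\sigma_q^2,\sigma_p^2\}\,k\log N}$, the $V^*$-to-exterior and exterior-to-exterior blocks a term of order $\gamma\,(\log N)\sqrt{\sigma_p^2 N}$, and the degree corrections together with the truncation inherent in the Bernstein bound a term of order $\gamma\,(\log N)^{3/2}$; since $\gamma\asymp 1/((q-p)k)$, requiring each of these to be at most a fixed small constant is precisely the three-term condition~\eqref{eq: gap 1}. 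The sign conditions reduce to Chernoff/Bernstein estimates showing that the number of non-neighbors inside $V^*$ of each $i\in V^*$, and the number of neighbors in $V^*$ of each $i\notin V^*$, concentrate about their means within the same tolerances, which again follows from~\eqref{eq: gap 1}. Hence the certificate is valid with high probability and $(\X^*,\Y^*)$ is optimal.

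For uniqueness I would make the certificate strict --- strict complementary slackness off the active sets and $\|\W\|<1$ on the relevant subspace --- so that any optimal $(\X,\Y)$ must share the rank-one structure, support pattern, and $\{0,1\}$-valued entries of $(\X^*,\Y^*)$, and therefore coincide with it. That $G(V^*)$ is furthermore the \emph{unique} densest $k$-subgraph follows from the density formula of Section~\ref{sec:dks} together with a union bound over $k$-subsets $S\neq V^*$: the expected surplus of edges of $G(V^*)$ over $G(S)$ is of order $(q-p)\big(\binom{k}{2}-\binom{|S\cap V^*|}{2}\big)$, which under~\eqref{eq: gap 1} dominates the Bernstein fluctuation of the edge count of $G(S)$ even after accounting for the number of such $S$ at each overlap with $V^*$.
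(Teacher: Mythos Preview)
Your strategy is essentially the paper's: certify $(\X^*,\Y^*)$ by building an explicit dual certificate from the KKT conditions, fix the free entries of $\W$ case by case so that $\W\v=\W^T\v=\0$, then verify the sign constraints on the multipliers via scalar Bernstein and the bound $\|\W\|<1$ via matrix concentration, with the three terms in~\eqref{eq: gap 1} emerging from the latter two steps. Two small corrections to your bookkeeping: the $\sigma_p$ contribution inside the first term of~\eqref{eq: gap 1} does not come from the $V^*\times V^*$ block of $\W$ (that block only sees $\sigma_q$) but from the nonnegativity of the multiplier $\bs\eta$ on the $V^*$-to-exterior non-edges, and the lower end of the $\gamma$-interval~\eqref{eq: gamma choice} likewise comes from $\bs\eta\ge\0$ rather than from zeroing out exterior row/column sums. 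More substantively, your claim that $\W-\E\W$ has entries that are ``independent (up to symmetry)'' is not quite right for the construction you need: to force $\W^T\v=\0$ on columns $j\notin V^*$, the paper sets the non-edge entries in that column to a common value depending on the \emph{column degree} $\nu_j$, so entries within a column of the $V^*\times(V\setminus V^*)$ block are dependent; a block-by-block matrix Bernstein with i.i.d.\ entries does not directly apply there, and the paper instead decomposes $\W$ into a mean-zero i.i.d.\ piece plus column-wise correction matrices and bounds the latter via a column-sum version of matrix Bernstein (Lemma~\ref{lem: thm 4}). This is a technical point rather than a structural gap, but it is exactly where the $(\log N)^{3/2}$ term acquires its extra $\sqrt{\log N}$ factor, so you should not expect a naive i.i.d.\ concentration argument to close.
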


Here, and in the rest of the paper, an event holding \emph{with high probability (w.h.p.)}, means
that the event occurs with probability tending polynomially to one as $N \ra \infty$;
that is, there are scalars $\hat c_1, \hat c_2 > 0$ such that the event occurs with probability at least
$1 - \hat c_1 N^{-\hat c_2}$.
Note that~\eqref{eq: gap 1} is only satisfiable when $k = \Omega((\log N)^{3/2})$.
To illustrate the contribution of Theorem~\ref{thm: rec} we consider a few choices of $p,q$, and $k$.

First, suppose that $p$ and $q$ are fixed so that the edge densities in $G$ are fixed as we vary $N$.
In this case, Theorem~\ref{thm: rec} states that we may recover $G(V^*)$, with high probability, provided that
$ k = \Omega\rbra{(\log N)^{3/2}}$.
This bound is identical to that found many times in the planted clique literature~\cite{ames2011nuclear,alon1998finding,feige2010finding,dekel2014finding,deshpande2015finding},
up to constants and the logarithmic term.
It is widely believed that finding planted cliques of size $o(\sqrt{N})$ is intractable;
indeed, several heuristic approaches have recently been proven to fail to recover planted cliques of size $o(\sqrt{N})$ in
polynomial-time~\cite{feige2002relations,alon2011inapproximability,khot2006ruling} and this intractability has been exploited in cryptographic applications~\cite{juels2000hiding}.

On the other hand, our bound shows that planted cliques of size much smaller than $\sqrt{N}$
can be recovered in the presence of \emph{sparse} noise. This should not be seen as a proof that we can recover planted cliques of size $o(\sqrt{N})$ in general, but rather evidence of the intimate relationship between the size of hidden cliques recoverable and the noise obscuring them. If very little noise in the form of diversionary edges is hiding the signal, here the planted clique, we should expect
the signal to be significantly easier to recover. This is reflected in the fact that we can recover significantly smaller cliques than $o(\sqrt{N})$ in this setting.
For example, let $q$ be a fixed constant and let $p$ vary with $N$ such that $p  \le \log N/N$.
The probability of adding an edge outside of $G(V^*)$ tends to zero as $k, N \ra \infty$.
Further, the left-hand side of~\eqref{eq: gap 1} tends to $q$ as $N \ra \infty$, and the dominant term in  the right-hand side  is $(\log N)^{3/2} / k $.
This implies that we can have exact recovery  of the hidden clique w.h.p.~provided $k = \Omega\rbra{(\log N)^{3/2}}$.
This lower bound on the size of recoverable $k$-subgraph matches that for identifying clusters
in sparse graphs provided in the graph clustering literature,
albeit for the case where the graph contains the single cluster indexed by $V^*$ surrounding by many outlier nodes (see~\cite{li2018convex} and the references within).
Moreover, this lower bound improves significantly upon that given by~\cite{ames2015guaranteed}, where it is shown to that $k=\Omega(N^{1/3})$ is sufficient for exact recovery w.h.p.~in the presence of sparse noise.

\section{The Densest Submatrix Problem}
\label{sec:DSM}

The densest $k$-subgraph problem is a specialization of the far more general densest submatrix problem.
Let ${[M]} = \{1,2,\dots, M\}$ for each positive integer $M$.
Given a matrix $\A \in \R^{M\times N}$, the \emph{densest $m\times n$-submatrix problem} seeks subsets $\bar U \subseteq {[M]}$ and $\bar V \subseteq {[N]}$ of cardinality
$|\bar U|=m$ and $|\bar V| = n$, respectively,
such that the submatrix $\A{[\bar U, \bar V]}$ with rows index by $\bar U$ and columns indexed by $\bar V$
contains the maximum number of nonzero entries.
It should be clear that this specializes immediately to the densest $k$-subgraph problem when the input matrix  is the perturbed adjacency matrix $\A= \A_G + \I$ of the input graph and $m= n=k$.
However, the densest $m\times n$-submatrix problem allows far more flexible problem settings.

For example, the densest submatrix problem also specializes immediately to the maximum edge/density biclique problem. Let $G = (U,V, E)$ be a bipartite graph. Given integer $m,n$, the decision version of the maximum edge biclique problem
determines if $G$ contains an $m\times n$ biclique, i.e., whether there are vertex sets $\bar U \subseteq U$, $\bar V \subseteq V$ of cardinality $|\bar U|=m$ and $|\bar V| = n$ such that each vertex in $\bar U$ is adjacent to every vertex in $\bar V$.
This problem immediately specializes to the densest $m\times n$-subgraph problem with $\A$ equal to the $(U,V)$-block of the adjacency matrix of $G$.
Similar specializations exist for finding the densest subgraph in directed graphs, hypergraphs, and so on.

Let $\Omega$ denote the index set of zero entries of given matrix $\A \in \R^{M\times N}$. Without loss of generality, we may assume that the entries of $\A$ are binary.
If not, then we may replace $\A$ with the binary matrix having the same sparsity pattern without changing the index set of the densest $m\times n$-submatrix.
We would like to obtain a rank-one matrix $\X$ with  $mn$ nonzero entries with minimum number of disagreements $\A$ on $\Omega$:
\begin{equation}\label{eq: dsm prob}
  \begin{array}{cl}
    \displaystyle \min_{\X, \Y \in { \{0,1\}}^{M\times N} } &  \tr(\Y\e\e^T) \\
    \st & \tr(\X \e\e^T) = mn, \;P_\Omega(\X - \Y) = \0,\; \rank \X = 1,
  \end{array}
\end{equation}
where $\Y$ is used to count the number of disagreements between $\A$ and $\X$.
Relaxing binary and rank constraints as before, we obtain the convex relaxation
\begin{equation}\label{eq: dsm rel}
\begin{array}{cl}
	\displaystyle\min & \|\X \|_* + \gamma \tr(\Y \e\e^T)  \\
	\st &  \tr(\X \e\e^T) = mn, \;P_\Omega(\X - \Y) = \0, \; \0 \le \X \le \e\e^T,\; \0 \le \Y,
\end{array}
\end{equation}
where $\gamma >0$ is a regularization parameter chosen to tune between the two objectives.
As before, we should expect to recover the solution of~\eqref{eq: dsm prob} from that of~\eqref{eq: dsm rel} when $\A$ contains
a single large dense $m\times n$ block. The following definition proposes a class of random matrices with this property.

\begin{definition}
	We construct an  $M\times N$ random binary matrix $\A$ as follows.
	Let $U^* \subseteq {[M]}$ and $V^* \subseteq {[N]}$ be $m$ and $n$-index sets.
	For each  $i \in U^*$ and $j\in V^*$, we let $a_{ij} = 1$ with probability $q$ and
  $0$ otherwise.
	For each remaining $(i,j)$ we set $a_{ij} = 1$ with probability $p < q$ and take $a_{ij} = 0$ otherwise.
	We say such a matrix  $\A$ is sampled from the \emph{planted dense $m\times n$-submatrix model.}
\end{definition}

The following theorem provides a  sufficient condition for exact recovery of a planted dense $m\times n$-submatrix generalizing
the analogous result for recovery of a planted dense $k$-subgraph given by Theorem~\ref{thm: rec}.

\begin{theorem}\label{thm: dsm rec}
  Suppose that the matrix $\A \in \R^{M\times N}$ is sampled from the planted dense $m\times n$-subgraph model
  with edge probabilities $q$ and $p$, respectively, with rows and columns of the planted dense subgraph
  indexed by $U^*$ and $V^*$, respectively.
  Let $(\X^*, \Y^*)$ denote the matrix representation of  $\A(U^*, V^*)$.
  Let $N_{\max} := \max\{M,N\}$ and $n_{\min} := \min\{m,n\}$.
  Then there are constants $c_1, c_2, c_3> 0$ such that if
  \begin{equation}\label{eq: dsm gap}
    q-p \ge c_1 \max \bra{ \sqrt{\max\{\sigma^2_q, \sigma^2_p\} \frac{ \log N_{\max} }{n_{\min}} },
    \frac{\log N_{\max}}{n_{\min}} \sqrt{ \sigma_p^2 N_{\max}},
     \frac{(\log N_{\max})^{3/2} }{ n_{\min}} }
  \end{equation}
  then $(\X^*, \Y^*)$ is the unique optimal solution of~\eqref{eq: dsm rel}
  for penalty parameter
  $ \gamma = t/((q-p)n_{\min})$
    for all $c_2 \le t \le c_3$,
	and $\A(U^*, V^*)$ is the unique densest $m\times n$-submatrix of $\A$ with high probability.
\end{theorem}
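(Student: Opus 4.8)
The plan is to establish optimality of $(\X^*,\Y^*)$ for the convex relaxation~\eqref{eq: dsm rel} via the construction of a dual certificate, and then argue uniqueness. Since~\eqref{eq: dsm rel} is a convex program with a nonsmooth objective (the nuclear norm), the natural tool is the KKT/optimality conditions: I would write down the Lagrangian, introducing multipliers for the linear equality constraints $\tr(\X\e\e^T)=mn$ and $P_\Omega(\X-\Y)=\0$, and for the box/nonnegativity constraints $\0\le\X\le\e\e^T$ and $\0\le\Y$. A point $(\X^*,\Y^*)$ is optimal if there exist multipliers such that $\0$ lies in the subdifferential of the Lagrangian at $(\X^*,\Y^*)$, which amounts to: (i) a matrix in $\partial\|\X^*\|_*$ balancing the contributions of the equality-constraint multipliers and the box-constraint multipliers on the $\X$-block; (ii) the analogous stationarity on the $\Y$-block relating $\gamma\e\e^T$ to the $\Omega$-multiplier and the nonnegativity multiplier; and (iii) complementary slackness. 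The structure of $\X^*=\v_U\v_V^T$ (the rank-one outer product of the characteristic vectors of $U^*$ and $V^*$, up to normalization) makes the subdifferential of the nuclear norm explicit: it consists of $\frac{1}{mn}\X^* \cdot (\text{const})$ plus any matrix $\W$ with $\|\W\|\le 1$ whose row/column spaces are orthogonal to those of $\X^*$. So the certificate reduces to choosing such a $\W$ and the scalar multipliers to satisfy all the stationarity equations, with the spectral-norm bound $\|\W\|\le 1$ being the binding requirement.

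**Key steps, in order.** First, I would set up the matrix representation and the Lagrangian precisely, deriving the deterministic system of equations and inequalities that a valid certificate must satisfy; this is bookkeeping analogous to the clique case in~\cite{ames2015guaranteed}. Second, I would propose an explicit candidate certificate: the $\Omega$-multiplier on entries inside $U^*\times V^*$ should be built from the observed noise pattern $\A - (\text{ideal})$ restricted to $U^*\times V^*$ (the "missing edges" inside the planted block), the multiplier on $\X$ outside the planted block should absorb $\A$'s entries there, and the off-diagonal-block multipliers should be chosen to keep $\W$'s spectral norm small. Concretely, $\W$ will be (a rescaling of) a random matrix whose entries are centered versions of the Bernoulli noise — roughly, inside the block the entries reflect $q$-Bernoulli fluctuations and outside they reflect $p$-Bernoulli fluctuations. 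Third — and this is the analytic heart — I would bound $\|\W\|$ using matrix concentration: splitting $\W$ into its block of expectation (whose spectral norm is controlled by the gap $q-p$ times the appropriate scaling $1/((q-p)n_{\min})$, giving an $O(1)$ or smaller contribution) and its mean-zero random part, and then applying a matrix Bernstein / Latała-type bound for the spectral norm of a random matrix with independent entries. The variance proxy for the random part produces the $\sqrt{\max\{\sigma_q^2,\sigma_p^2\}\log N_{\max}/n_{\min}}$ and $\frac{\log N_{\max}}{n_{\min}}\sqrt{\sigma_p^2 N_{\max}}$ terms, and the crude $\|\cdot\|_\infty$-to-operator-norm bound on the worst rows/columns produces the $(\log N_{\max})^{3/2}/n_{\min}$ term; requiring each to be $\le$ a constant fraction of $q-p$ is exactly~\eqref{eq: dsm gap}. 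Fourth, I would verify the box-constraint complementary slackness and the sign conditions on the nonnegativity multipliers — these hold as long as $p$ is bounded away from $0$ and $1$ and the noise realization is typical, i.e., on the same high-probability event. Fifth, for uniqueness I would argue that strict complementary slackness holds (the proposed $\W$ has $\|\W\|<1$ strictly, again on the high-probability event), which forces any optimal solution to share the support and rank structure of $(\X^*,\Y^*)$ and hence to equal it; this simultaneously shows $\A(U^*,V^*)$ is the unique densest $m\times n$-submatrix, since any denser submatrix would yield a competing feasible point with strictly smaller objective.

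**Main obstacle.** The hard part will be the spectral-norm control of the random certificate matrix $\W$ in the rectangular, heterogeneous-variance setting. Unlike the symmetric planted-clique case, here the planted block is $m\times n$ with $m\neq n$ in general, embedded in an $M\times N$ matrix with $M\neq N$, and the noise variance differs inside ($\sigma_q^2$) versus outside ($\sigma_p^2$) the block; getting the three-term maximum in~\eqref{eq: dsm gap} with the right dependence on $n_{\min}=\min\{m,n\}$ and $N_{\max}=\max\{M,N\}$ requires carefully tracking which block of $\W$ contributes which term. In particular, the off-diagonal blocks (rows in $U^*$ but columns outside $V^*$, and vice versa) are the source of the second term and must be bounded by a non-commutative Khintchine or Bernstein inequality applied block by block, then reassembled. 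I would also need to confirm that the interval for $\gamma$, namely $\gamma=t/((q-p)n_{\min})$ with $c_2\le t\le c_3$, is genuinely nonempty and compatible with all the inequalities the certificate must satisfy — this pins down the admissible constants and is where the "lower and upper" bounds on the penalty parameter both become active. Everything else (the Lagrangian setup, the complementary-slackness checks, the uniqueness argument) is structurally parallel to~\cite{ames2015guaranteed} and should go through with the obvious modifications, so I would cite that framework and concentrate the new work on the concentration estimates that yield the improved, sparsity-aware threshold~\eqref{eq: dsm gap}.
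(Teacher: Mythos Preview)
Your proposal is correct and follows essentially the same approach as the paper: a dual certificate built from the KKT conditions (Theorem~\ref{thm: KKT}), with the nuclear-norm subdifferential handled by constructing $\W$ orthogonal to the singular vectors of $\X^*$ and then bounding $\|\W\|$ block-by-block via matrix concentration, exactly as in~\cite{ames2015guaranteed}. One point you slightly understate is that enforcing the orthogonality $\W^T\bar\u=\W\bar\v=\0$ forces the off-block entries of $\W$ to depend \emph{nonlinearly} on the random column/row sums (Cases~5 and~6 in the paper), so those blocks are not simply centered Bernoullis and require an extra approximation step (the paper's Lemma~\ref{lem: thm 4}) before matrix Bernstein applies; this is where the $(\log N_{\max})^{3/2}/n_{\min}$ term actually originates, and similarly the rank-two ansatz $\bs\Lambda(\bar U,\bar V)=\y\e^T+\e\z^T$ is needed to solve the underdetermined system for $\bs\Lambda$.
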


In the case when $M=N$ and $m=n$, the inequality~\eqref{eq: dsm gap} specializes to~\eqref{eq: gap 1}, although the constants $c_1, c_2, c_3$ should differ due to the lack of an assumption of symmetry of $\X^*$ and $\Y^*$ in Theorem~\ref{thm: dsm rec}.


\section{Derivation of the Recovery Guarantees}
\label{sec:DSM-proof}

This section will consist of a proof of Theorem~\ref{thm: dsm rec}.
 The proof of~Theorem~\ref{thm: rec} is identical except for minor modifications due to the symmetry of $\A$.
%
%
%
We begin with the following theorem, which provides the required optimality conditions for~\eqref{eq: dsm rel}.
\renewcommand{\u}{{\bs{u}}}

\begin{theorem}\label{thm: KKT}
	Let $\bar U \subseteq \{1,\dots, M\}$ be a $m$-subset of $[M]$
	and let $\bar V \subseteq \{1, \dots, N\}$  be a $n$-subset of $[N]$, and
	$\bar\u, \bar\v$ be their characteristic vectors.
	Then the solutions $\bar\X = \bar\u \bar \v^T$ and $\bar \Y = P_\Omega(\bar\X)$
	are optimal  for~\eqref{eq: dsm rel} if and only
	if there are dual multipliers $\lambda \ge 0$,  $\bs\Lambda \in \R^{M\times N}_+$, $\bs\Xi \in \R^{M\times N}_+$,
	and $\W \in \R^{M\times N}$ satisfying
	\begin{subequations}\label{eq: KKT}
	\begin{align}
		\frac{\bar\u\bar \v^T}{\sqrt{mn}} + \W - \lambda \e\e^T  +\gamma \e\e^T- \bs\Xi + \bs\Lambda &= \0 	\label{eq: DF} \\
		\tr(\bs\Lambda^T(\bar\X - \e\e^T)) &= 0 \label{eq: X CS} \\
		\tr(\bs\Xi^T \bar\Y) &= 0 \label{eq: Y CS} \\
		\W^T \bar\u = \0, \; \W \bar \v =\0,\; \|\W \| & \le 1 \label{eq: NNM subdiff}.
	\end{align}
	\end{subequations}
\end{theorem}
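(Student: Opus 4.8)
The plan is to derive the stated conditions as the Karush–Kuhn–Tucker (KKT) conditions of the convex program~\eqref{eq: dsm rel}, exploiting convexity to argue that these conditions are both necessary and sufficient for optimality. First I would rewrite~\eqref{eq: dsm rel} with explicit multipliers: $\lambda \in \R$ for the linear equality $\tr(\X\e\e^T) = mn$; a multiplier associated with $P_\Omega(\X-\Y)=\0$ (call it $\bs\Theta$ supported on $\Omega$); $\bs\Lambda \in \R^{M\times N}_+$ for the upper box constraint $\X \le \e\e^T$; $\bs\Xi_{\X} \in \R^{M\times N}_+$ for $\X \ge \0$; and $\bs\Xi \in \R^{M\times N}_+$ for $\Y \ge \0$. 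The Lagrangian is $\|\X\|_* + \gamma\tr(\Y\e\e^T) - \lambda(\tr(\X\e\e^T)-mn) + \tr(\bs\Theta^T P_\Omega(\X-\Y)) + \tr(\bs\Lambda^T(\X - \e\e^T)) - \tr(\bs\Xi_\X^T \X) - \tr(\bs\Xi^T\Y)$. Stationarity in $\Y$ gives $\gamma\e\e^T - P_\Omega(\bs\Theta) - \bs\Xi = \0$; stationarity in $\X$ gives $\bs G - \lambda\e\e^T + P_\Omega(\bs\Theta) + \bs\Lambda - \bs\Xi_\X = \0$ for some subgradient $\bs G \in \partial\|\bar\X\|_*$. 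The key consolidation step is to set $\W := \bs G - \bar\u\bar\v^T/\sqrt{mn}$ and eliminate $\bs\Theta$ by substituting $P_\Omega(\bs\Theta) = \gamma\e\e^T - \bs\Xi$ into the $\X$-stationarity equation, which produces exactly~\eqref{eq: DF} once one also absorbs $\bs\Xi_\X$; I need to check that $\bs\Xi_\X$ can be taken to be $\0$ on the support of $\bar\X$ and that the two positive-part multipliers $\bs\Xi_\X, \bs\Lambda$ decompose~\eqref{eq: DF} consistently (this is where the conditions $\0 \le \X \le \e\e^T$ being inactive/active on complementary index sets matters).

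The subgradient characterization of the nuclear norm is the analytical heart of the argument: for the rank-one matrix $\bar\X = \bar\u\bar\v^T$ with $\|\bar\u\| = \sqrt{m}$, $\|\bar\v\| = \sqrt{n}$, the singular value decomposition has left/right singular vectors $\bar\u/\sqrt m$, $\bar\v/\sqrt n$, so $\partial\|\bar\X\|_* = \{\,\bar\u\bar\v^T/\sqrt{mn} + \W : \W^T\bar\u = \0,\ \W\bar\v = \0,\ \|\W\| \le 1\,\}$, which is precisely~\eqref{eq: NNM subdiff}. I would cite the standard reference (as in~\cite{recht2010guaranteed}) for this characterization. Complementary slackness for the box constraints yields~\eqref{eq: X CS} (namely $\tr(\bs\Lambda^T(\bar\X - \e\e^T)) = 0$) and~\eqref{eq: Y CS} ($\tr(\bs\Xi^T\bar\Y) = 0$); the lower-bound multiplier $\bs\Xi_\X$ contributes $\tr(\bs\Xi_\X^T\bar\X) = 0$, and since $\bar\X$ is $0/1$-valued this forces $\bs\Xi_\X$ to vanish wherever $\bar X_{ij} = 1$, which lets me fold $-\bs\Xi_\X$ into the sign pattern of $\bs\Lambda$ on the complementary entries; I should verify this bookkeeping is exactly what makes~\eqref{eq: DF} equivalent to the full stationarity system without a separate $\bs\Xi_\X$ term.

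For the "if and only if": the "if" direction is the easy half — given multipliers satisfying~\eqref{eq: KKT}, one checks that $(\bar\X,\bar\Y)$ is feasible (immediate, since $\bar\X = \bar\u\bar\v^T$ has $mn$ ones, $\bar\Y = P_\Omega(\bar\X)$, and the box constraints hold), and then convexity plus the KKT relations give a subgradient of the objective that is normal to the feasible set at $(\bar\X,\bar\Y)$, certifying global optimality; I would phrase this as: for any feasible $(\X,\Y)$, $f(\X,\Y) - f(\bar\X,\bar\Y) \ge \langle \partial f, (\X,\Y) - (\bar\X,\bar\Y)\rangle \ge 0$, where the last inequality uses~\eqref{eq: DF}, the sign constraints $\bs\Lambda,\bs\Xi \ge \0$, and complementary slackness~\eqref{eq: X CS}--\eqref{eq: Y CS}. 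The "only if" direction follows from Slater's condition: $(\X,\Y) = (\tfrac{mn}{MN}\e\e^T, \tfrac{mn}{MN}\e\e^T)$ is strictly feasible for the inequality constraints while satisfying the equalities, so strong duality holds and KKT multipliers exist at any optimum. The main obstacle I anticipate is not any single estimate but the careful elimination of the constraint $P_\Omega(\X-\Y)=\0$ and its multiplier so that the final system reads cleanly in terms of $\W$, $\lambda$, $\bs\Lambda$, $\bs\Xi$ only — in particular making sure the $P_\Omega(\bs\Theta)$ term is fully absorbed and that no residual constraint on $\W$ off $\Omega$ is needed; getting this reduction exactly right, together with the $\bs\Xi_\X$ bookkeeping, is the delicate part, whereas everything else is a routine application of convex duality.
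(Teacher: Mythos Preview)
Your approach is correct and is exactly the standard KKT derivation the paper has in mind; the paper itself omits the proof, noting only that it is ``nearly identical to that of \cite[Theorem~4.1]{ames2015guaranteed}.'' Two small bookkeeping points to tighten: the lower-box multiplier $\bs\Xi_\X$ should be absorbed into $\bs\Xi$ (not $\bs\Lambda$) on the complement of $\bar U\times\bar V$, since complementary slackness~\eqref{eq: X CS} forces $\bs\Lambda$ to vanish there while $\bar\Y$ vanishes there and leaves $\bs\Xi$ free; and you should briefly justify why the equality-constraint multiplier $\lambda$ can be taken nonnegative as asserted in the theorem statement.
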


The proof of~Theorem~\ref{thm: KKT} is nearly identical to that of \cite[Theorem~4.1]{ames2015guaranteed} and is omitted.
Suppose that $\A$ is sampled from the planted dense $(m,n)$-subgraph model
with edge probabilities $q > p$. Our goal is to establish the
conditions on $m,n, q, p$ given by Theorem~\ref{thm: dsm rec} that guarantee
exact recovery (w.h.p.) of the matrix representation $(\bar \X, \bar \Y)$ of
the planted submatrix with rows and columns given by $\bar U$ and $\bar V$ respectively.
Our approach follows that of~\cite[Section 4]{ames2015guaranteed}.
We first explicitly construct dual multipliers $\W$ and $\bs\Xi$ using the dual feasibility condition~\eqref{eq: DF}
and the complementary slackness conditions~\eqref{eq: X CS} and~\eqref{eq: Y CS}.
We then use the characterization of the subdifferential of the nuclear norm~\eqref{eq: NNM subdiff}
to construct the remaining dual variables $\lambda, \bs \Lambda$.
We conclude the proof by using concentration inequalities to  establish feasibility of the proposed dual variables under the hypothesis of Theorem~\ref{thm: rec}.

We choose $\W$ and $\bs \Xi$ according to the dual feasibility condition~\eqref{eq: DF} so that the orthogonality conditions $\W\bar\v = \0$ and $\W^T\bar\u = \0$
are satisfied.
We consider the following cases.

\begin{itemize}
	\item[]\label{cs: W1}
		{\bf Case 1.}
		If $(i,j) \in \bar U \times \bar V - \Omega$, then~\eqref{eq: DF} implies that
		$$
			W_{ij} = \lambda - \frac{1}{\sqrt{mn}} - \Lambda_{ij} =: \tilde \lambda - \Lambda_{ij},
		$$
		if we take $\Xi_{ij} = \gamma$ and define $\tilde \lambda := \lambda - 1/\sqrt{mn}.$
	\item[]\label{cs: W2}
 		{\bf Case 2.}
 		If $i \in \bar U$, $j \in \bar V$, and $(i,j) \in \Omega$, then we have
		$\bar X_{ij} =  \bar Y_{ij} = 1/\sqrt{mn}$, so $\Xi_{ij} = 0$ by~\eqref{eq: Y CS}.
		It follows that $W_{ij} = \tilde\lambda - \gamma - \Lambda_{ij}$ in this case.
	\item[]\label{cs: W3}
		{\bf Case 3.}
		If $(i,j) \notin \bar U \times \bar V$ such that $(i,j) \notin \Omega$ then we take
		$ W_{ij} = \lambda$ and $\Xi_{ij} = \gamma$.
    \item[]\label{cs: W4}
      {\bf Case 4.}
      If $i \notin \bar U$, $j \notin \bar V$ such that $(i,j) \in \Omega$, we take
      $ W_{ij} = -\lambda p/(1-p)$ and $\Xi_{ij} =  \gamma - \lambda/(1-p)$.
    \item[]\label{cs: W5}
      {\bf Case 5.}
      If $i \in  \bar U$ and $j \notin \bar V$ such that $ij \in \Omega$, we take
      $$
        W_{ij} =  - \lambda \rbra{\frac{\nu_j}{m - \nu_j}},
      $$
      where $\nu_j$ denotes the number of nonzero entries in the
      $j$th column of $\A$ indexed by rows in $\bar U$.
      so that ${[\W^T \bar \u]}_j = 0$. By our choice of $W_{ij}$, we have
      $$
        \Xi_{ij} = \gamma - \frac{\lambda m}{m - \nu_j}.
      $$
    \item[]\label{cs: W6}
      {\bf Case 6.}
      If $i \notin \bar U$, $j\in \bar V$, and $(i,j) \in \Omega$ then we take
      $$
        W_{ij} = - \frac{\lambda \mu_i}{n-\mu_i} \hspace{0.5in}
        \Xi_{ij} = \gamma - \frac{\lambda n}{n -\mu_i},
      $$
      where $\mu_i$ denotes the number nonzero entries in the $i$th row
      of $\A$ indexed by columns in $\bar V$.

  \end{itemize}

  By our choice of $\W$ and $\bs\Xi$, we have ${[\W\bar\v]}_i =0$  for all $i \notin \bar U$ and ${[\W^T\bar \u]}_i = 0$ for all $i \notin \bar V$.
  We choose the remaining dual variables $\lambda$ and $\bs\Lambda$ so that ${[\W\bar \v]}_i = 0$ for all $i \in \bar U$ and ${[\W^T\bar \u]}_i =0$ for all $i \in \bar V$.
  %
  \newcommand{\bm}{{\bs{\bar\mu}}}
  \newcommand{\bn}{{\bs{\bar\nu}}}
  The  orthogonality conditions  $\W^T\bar \u = \bs 0$ and $\W\bar\v = \bs 0$
  define
  a linear system with $m+n$ equations for the $mn$ unknown entries of $\bs\Lambda$
  when all other dual variables are fixed.
  To obtain a particular solution of this underdetermined linear system,
  we make the additional assumption that $\bs\Lambda(\bar U, \bar V)$ has rank at most 2, taking the form
  $
    \bs\Lambda(\bar U, \bar V) = \y \e^T + \e\z^T
  $
  for some  $\y \in \R^m$ and $\z \in \R^n$.
  Under this assumption, the conditions ${[\W\v]}_i =0$, $i\in \bar U$
  and  ${[\W^T\u]}_j = 0$, $j \in \bar V$
  yield the linear system
  \begin{equation} \label{eq: yz sys}
    \mat{{cc} n \bs I & \e\e^T \\ \e \e^T & m \bs I }
    \mat{{c} \y \\ \z }
    =
    \mat{{c} - \gamma \bs{ \bar\mu} + n \tilde \lambda \e \\
      - \gamma \bs {\bar \nu} + m \tilde \lambda \e },
  \end{equation}
  where the vectors $\bm$ and $\bn$ are defined by
  $\bar\mu_i = n - \mu_i$ for all $i \in \bar U$
  and $\bar \nu_j = m - \nu_j$ for all $j \in \bar V$.
  It is easy to see that this system is singular with null space spanned by $(\e; -\e)$.
  However, it is also easy to see that the unique solution of
  \begin{equation} \label{eq: yz sys2}
    \mat{{cc} n \bs I + \e\e^T & \bs 0 \\
        \bs 0 & m \bs I + \e\e^T }
    \mat{{c} \y \\ \z }
    =
    \mat{{c} - \gamma \bs{ \bar\mu} + n \tilde \lambda \e \\
      - \gamma \bs {\bar \nu} + m \tilde \lambda \e }
  \end{equation}
  is  a solution of~\eqref{eq: yz sys};
  see~\cite[Section 4.2]{ames2015guaranteed} for further details.
  Applying the Sherman-Morrison-Woodbury Formula (see~\cite[Equation (2.1.4)]{GV}), we have
  \begin{equation} \label{eq: y z formula}
    \y =  \frac{1}{n} \rbra{ \tilde \lambda \frac{n^2}{m+n}
        - \gamma \bm + \gamma \frac{\bm^T \e}{m+n} \e }, \;
    \z = \frac{1}{m} \rbra{ \tilde \lambda \frac{m^2}{m+n} - \gamma \bn
        + \gamma \frac{\bn^T \e}{m+n} \e }. 
  \end{equation}
  The entries of $\bm$ and $\bn$
  are binomial random variables corresponding to $n$ and $m$ independent Bernoulli trials ith probability of success $1-q$, respectively.
  Therefore, we have
  \begin{equation} \label{eq: Ey Ez}
    \E[\y] = \frac{n}{m+n} \rbra{ \tilde\lambda - \gamma (1-q) } \e
    \hspace{0.25in}
    \E[\z] = \frac{m}{m+n} \rbra{ \tilde\lambda - \gamma (1-q) } \e.
  \end{equation}
  Choosing
  $
    \lambda = \frac{1}{\sqrt{mn}} + \gamma (1-q) + \gamma \tau
  $
  for some $\tau >0$ to be chosen later ensures that the entries of $\bs\Lambda$ are strictly positive in expectation.

  We next describe how to choose $\tau$ so that the entries of $\y$ and $\z$ are positive with high probability.
  To do so, we will make repeated use of the following specialization of the classical Bernstein inequality to bound the sum of independent Bernoulli  random variables
  (see, for example,\cite[Section~2.8]{lugosi2009}).

  \begin{lemma}\label{lem: Bernstein}
    Let $x_1, \dots, x_k$ be a sequence of $k$ independent $\{0,1\}$ Bernoulli random variables,
    each with probability of success $\rho$.
    Let $s = \sum_{i=1}^k x_i$ be the binomially distributed random variable denoting the number of successes.
    Then
    \begin{equation} \label{eq: Bernstein}
       \Pr \rbra{ |s - \rho k| > 6 \max \bra{ \sqrt{\rho (1-\rho) k \log t}, \log t } }  \le 2 t^{-6} .
    \end{equation}
  \end{lemma}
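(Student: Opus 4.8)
The plan is to obtain this as a direct specialization of the classical Bernstein inequality for sums of bounded independent random variables, which is exactly the statement cited from \cite[Section~2.8]{lugosi2009}. Set $Y_i = x_i - \rho$ for $i = 1, \dots, k$. The $Y_i$ are independent and mean-zero, satisfy $|Y_i| \le 1$ almost surely, and have common variance $\rho(1-\rho)$. Writing $\sigma^2 := \rho(1-\rho)$, Bernstein's inequality applied to $s - \rho k = \sum_{i=1}^k Y_i$ yields, for every $u > 0$,
$$
  \Pr\rbra{ |s - \rho k| > u } \le 2 \exp\rbra{ -\frac{u^2/2}{\sigma^2 k + u/3} }.
$$
It then remains only to substitute $u = 6 \max\{ \sqrt{\sigma^2 k \log t}, \log t \}$ and verify that the exponent is bounded above by $-6 \log t$, so that the right-hand side is at most $2 t^{-6}$.

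The verification splits into the two cases determined by which term attains the maximum. If $\sqrt{\sigma^2 k \log t} \ge \log t$, equivalently $\sigma^2 k \ge \log t$, then $u = 6\sqrt{\sigma^2 k \log t}$, so $u^2/2 = 18\, \sigma^2 k \log t$ and $u/3 = 2\sqrt{\sigma^2 k \log t} \le 2 \sigma^2 k$; hence the denominator satisfies $\sigma^2 k + u/3 \le 3 \sigma^2 k$, and the exponent is at most $-18\, \sigma^2 k \log t / (3 \sigma^2 k) = -6 \log t$. If instead $\log t > \sqrt{\sigma^2 k \log t}$, equivalently $\sigma^2 k < \log t$, then $u = 6 \log t$, so $u^2/2 = 18 (\log t)^2$ and $u/3 = 2 \log t$; the denominator satisfies $\sigma^2 k + u/3 < 3 \log t$, and the exponent is again at most $-18 (\log t)^2 / (3 \log t) = -6 \log t$. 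In both cases $\Pr(|s - \rho k| > u) \le 2 \exp(-6 \log t) = 2 t^{-6}$, which is the claimed bound.

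There is no substantive obstacle here: the only points requiring care are the two equivalences $\sqrt{\sigma^2 k \log t} \ge \log t \iff \sigma^2 k \ge \log t$ used to bound $u/3$ against the variance term (and conversely), and the observation that the constant $6$ in the statement is calibrated precisely so that the exponent lands on $-6 \log t$; any larger constant would serve just as well. One could alternatively route through a multiplicative Chernoff bound, but the additive Bernstein form above is the cleanest and matches how the lemma is invoked in the subsequent concentration arguments.
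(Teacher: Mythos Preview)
Your proof is correct and matches the paper's approach: the paper states this lemma as a direct specialization of the classical Bernstein inequality (citing \cite[Section~2.8]{lugosi2009}) and provides no further argument, so your derivation simply fills in the routine details that the paper leaves implicit.
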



  Applying~\eqref{eq: Bernstein} with $t = N$ to each component of $\bm$ and $\bn$  and the union bound shows that
  \begin{align}\label{eq: mi}
    |\bar \mu_i - (1-q) n |  & \le 6 \max \{ \sqrt{\sigma_q^2 n \log N}, \log N \} \\
    |\bar \nu_j -  (1-q) m | & \le 6 \max \{ \sqrt{\sigma_q^2 m \log N}, \log N \}
    \label{eq: ni}
  \end{align}
  for all $i \in \bar U$  and $j \in \bar V$ w.h.p.,
  where $\sigma_q^2 = q(1-q)$.
   On the other hand, $\bn^T\e  =  \bm^T \e$ is equal
  to the number of nonzero entries in the $\bar U \times \bar V$ block of $\A$.
  Therefore, $\bn^T\e  =  \bm^T \e$ is a binomially distributed
  random variable, with $\E[\bn^T\e] = \E[\bm^T \e] = mn(1-q)$.
  Applying~\eqref{eq: Bernstein} with $t = N$ again establishes that
  \begin{equation} \label{eq: m sum}
    |\bn^T \e - (1-q)mn| = |\bm^T\e - (1-q)mn | \le 6 \max \{ \sqrt{\sigma_q^2 mn \log N}, \log N \}
  \end{equation}
  w.h.p.
  It follows immediately that
  \begin{align}
    | y_i &- \E[y_i] | \le \frac{\gamma}{n} \rbra{ | \bar \mu_i - \E[\bar\mu_i]|
      + \frac{1}{m+n} | \bm^T \e - \E{[\bm^T \e]} | } \notag \\
    &\le 6 \gamma \rbra{ 1 + \frac{1}{\sqrt{m}} } \max \bra{ \sqrt{\sigma_q^2 \frac{\log N}{n} }
    ,  \frac{\log N}{n} } \label{eq: y bound}
  \end{align}
  for each $i \in \bar U$ if~\eqref{eq: mi} and~\eqref{eq: m sum}
  are satisfied.
  Following an identical argument, we see that
  \begin{equation} \label{eq: z bound}
    |z_i - \E[z_i] | \le
    6 \gamma \rbra{ 1 + \frac{1}{\sqrt{n}} } \max \bra{ \sqrt{\sigma_q^2 \frac{\log N}{m} }
    ,  \frac{\log N}{m} }
  \end{equation}
  if~\eqref{eq: ni} and~\eqref{eq: m sum} hold.
  Substituting~\eqref{eq: y bound} and~\eqref{eq: z bound} into
  the formula for $\Lambda_{ij}$ shows that
  \begin{align*}
    \Lambda_{ij} &= y_i + z_j \ge \E[y_i] - |y_i - \E[y_i] | + \E[z_j] - |z_j - \E[z_j] | \\
    &\ge \gamma \tau - 12 \gamma \rbra{ 1 + \frac{1}{\sqrt{m}} } \max \bra{ \sqrt{\sigma_q^2 \frac{\log N}{m} }
    ,  \frac{\log N}{m} }
  \end{align*}
  for all $i \in \bar U$, $j\in \bar V$ w.h.p.;
  here, we use the assumption that $m \le n$.
  Choosing
  \begin{equation} \label{eq: tau}
    \tau = 12   \rbra{ 1 + \frac{1}{\sqrt{m}} } \max \bra{ \sqrt{\sigma_q^2 \frac{\log N}{m} }
    ,  \frac{\log N}{m} }
  \end{equation}
  ensures that the entries of $\bs \Lambda$ are nonnegative w.h.p.

  \subsection{Nonnegativity of $\bs \Xi$}

  \newcommand{\BX}{{\bs\Xi}}

  We next establish
  conditions on the regularization parameter $\gamma$
  ensuring that the entries of the dual variable $\BX$ are  nonnegative.
  Recall that $\Xi_{ij} $ takes value $0$ or $\gamma$
  for all $(i,j)$ except those corresponding to Cases 4 through 6 in the choice of $\W$ and $\BX$.

  We begin with  Case 5 in the construction of $\W$ and $\BX$.
  Recall that
  \begin{equation} \label{eq: xi5}
    \Xi_{ij} = \gamma - \frac{\lambda m }{m-\nu_j}
      = \frac{1}{m-\nu_j} \rbra{ \gamma (m q - \nu_j ) - \gamma m \tau - \frac{m}
        {\sqrt{mn} } }
  \end{equation}
  if $i \in \bar U$ and $j \notin \bar V$ such that $ij \in \Omega$.
  Since $\nu_j$ is a binomial random variable corresponding to $m$ independent Bernoulli trials
  with probability of success $p$, applying Bernstein's inequality~\eqref{eq: Bernstein}
  shows that
  $
    \nu_j \ge pm + 6 \max \{ \sqrt{ \sigma_p^2 m\log N}, \log N\}
  $,
  and, hence,
  \begin{align*}
    \Xi_{ij} \ge \frac{m}{m-\nu_j} \rbra{ \gamma \rbra{ q - p -
    {6\max\bra{\sqrt{\sigma_p^2 \frac{\log N}{m}}, \frac{\log N}{m} } - \tau} } - \frac{1}{\sqrt{mn}} }
  \end{align*}
  w.h.p., where $\sigma_p^2:= p(1-p)$.
  Under the gap assumption
  \begin{equation} \label{eq: gap}
    q - p \ge 18 \rbra{1 + \frac{1}{\sqrt{m}} }\max \bra{ \max \{ \sigma_q, \sigma_p \} \sqrt{\frac{\log N}{m}} , \frac{\log N}{m} }
  \end{equation}
  and the choice of $\tau$ given by
  we see that
  \[
    \Xi_{ij}  \ge \frac{m}{m- \nu_j} \rbra{
    \frac{\gamma}{\tilde c}(q-p) - \frac{1}
    {\sqrt{mn}} }
  \]
  w.h.p.~for some constant $\tilde c \ge 3$. An identical bound holds for entries of $\BX$ corresponding to Case 6 by symmetry.
  Finally, the bound for Case 4 follows by substituting $\nu_i = pm$ in~\eqref{eq: xi5} which establishes that $\Xi_{ij} \ge 0$ if $\gamma(q-p) \ge 3/\sqrt{mn}$ in this case. Applying the union bound over
  all entries in $\BX$  establishes that $\BX$ is nonnegative
  w.h.p.~if $q$ and $p$ satisfy the gap assumption~\eqref{eq: gap} and
  \begin{equation} \label{eq: gamma}
    \gamma \ge \frac{\tilde c}{(q-p) \sqrt{mn}}.
  \end{equation}

  \subsection{A bound on the matrix $\W$}

  \newcommand{\Q}{{\bs Q}}
  \renewcommand{\H}{\bs{H}}
  \renewcommand{\S}{\bs{S}}

  To complete the proof, we derive a sufficient condition involving $m, n, M, N ,p$, and $q$ that ensures that $\W$, as constructed above,
  satisfies $\|\W \| < 1$ with high probability.
  To simplify our notation, we again make the assumption that $m \le n$ and $M \le N$.
  Our analysis will translate superficially to the cases when $m\le n$ and $M \ge N$,
  $ m \ge n$ and $M \le N$, and $m\ge n$ and $M \ge N$.
  We bound $\|\W \|$ using the triangle inequality and the decomposition $\W = \gamma \Q +
  \lambda \S$, where
  $\gamma Q_{ij} = W_{ij}$ if $i \in \bar U$, $j\in \bar V$ and $\gamma Q_{ij} = 0$ otherwise.
  To bound the norms of $\Q$ and $\S$, we will make repeated use of the following bound on the norm of a random matrix.
  Specifically, Lemma~\ref{lem: Tropp} is a special case of the matrix concentration inequality given by~\cite[Corollary 3.11]{bandeira2014sharp} on the spectral norm of
  matrices with i.i.d.~mean zero bounded entries.

  \begin{lemma}\label{lem: Tropp}
    Let $\bs A = [a_{ij}]\in \R^{m\times n}$ be a random  matrix with i.i.d.~mean zero entries
    $a_{ij}$ having
    variance $\sigma^2$ and satisfying $|a_{ij}| \le B$.
    Let $n_{\max} = \max\{m,n\}$. Then there is a constant $c >0$ such that
    \begin{equation} \label{eq: Tropp}
      \Pr\rbra{\|\bs A\| > c \max \bra{\sqrt{\sigma^2 n_{\max}},
      \sqrt{B \log  t} }  } \le n_{\max} t^{-7}
    \end{equation}
    for all $t > 0$.
  \end{lemma}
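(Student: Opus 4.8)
The plan is to obtain Lemma~\ref{lem: Tropp} directly from the sharp bound on the spectral norm of a random matrix with independent entries established in~\cite[Corollary 3.11]{bandeira2014sharp}; that result does essentially all of the work, and what remains is to reduce the rectangular problem to the symmetric setting treated there and to keep track of constants. First I would pass to the Hermitian dilation
\[
	\widetilde{\A} = \mat{{cc} \bs 0 & \A \\ \A^T & \bs 0 },
\]
a symmetric $(m+n)\times(m+n)$ random matrix with $\|\widetilde{\A}\| = \|\A\|$ whose independent (above-diagonal) entries are mean zero, are bounded in absolute value by $B$, and have variance either $\sigma^2$ (the entries in the two off-diagonal blocks) or $0$ (the entries in the diagonal blocks). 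Then I would read off the two quantities that control the bound of~\cite{bandeira2014sharp}: the maximal row sum of entry variances, which equals $\sigma^2 n_{\max}$ since every row of $\widetilde{\A}$ contains exactly $m$ or exactly $n$ entries of variance $\sigma^2$, and the uniform entrywise bound, which is at most $B$.

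Next I would apply~\cite[Corollary 3.11]{bandeira2014sharp} with these parameters. For a fixed choice of the slack parameter appearing there, it supplies an absolute constant $c_0 > 0$ together with an estimate of the form $\E\|\widetilde{\A}\| \le c_0 \rbra{ \sqrt{\sigma^2 n_{\max}} + B\sqrt{\log (m+n)} }$ and an accompanying sub-Gaussian deviation inequality $\Pr\rbra{ \|\widetilde{\A}\| \ge \E\|\widetilde{\A}\| + s } \le (m+n) \exp\rbra{ -s^2/(c_0 B^2) }$ valid for all $s \ge 0$. Choosing $s$ to be a suitable constant multiple of $B\sqrt{\log t}$ and using $m + n \le 2 n_{\max}$, the failure probability is at most $n_{\max} t^{-7}$; on the complementary event, combining the two displays gives
\[
	\|\A\| = \|\widetilde{\A}\| \le c_0 \sqrt{\sigma^2 n_{\max}} + c_0 B \sqrt{\log(2 n_{\max})} + c_1 B\sqrt{\log t},
\]
and after enlarging the constant this yields $\|\A\| \le c \max\bra{ \sqrt{\sigma^2 n_{\max}}, \sqrt{B \log t} }$, as asserted.

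There is no real obstacle here, since the cited corollary is exactly the tool needed; the only step requiring a little care is the reconciliation of the \qu{expectation plus concentration} form of~\cite[Corollary 3.11]{bandeira2014sharp} with the single tail bound stated in the lemma, and in particular the absorption of the additive term $B\sqrt{\log(2 n_{\max})}$ into the claimed right-hand side. This is harmless: the inequality is vacuous unless $n_{\max} t^{-7} < 1$, and in that range $t^7 > n_{\max}$, so $\log(2 n_{\max}) \le 2\log n_{\max} < 14 \log t$ and that term is dominated by a constant multiple of $B\sqrt{\log t}$, as required. (In every application in this paper the entries of $\A$ are binary, so $B = 1$ and the distinction between $B\sqrt{\log t}$ and $\sqrt{B\log t}$ does not arise.)
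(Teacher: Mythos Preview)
Your proposal is correct and follows exactly the route the paper takes: the paper does not give a proof of this lemma at all, merely stating that it is a special case of~\cite[Corollary 3.11]{bandeira2014sharp}. Your argument fills in the details of that reduction carefully (Hermitian dilation, identification of the row-variance parameter as $\sigma^2 n_{\max}$, and absorption of the $B\sqrt{\log(2n_{\max})}$ term using the vacuity of the bound when $n_{\max} t^{-7} \ge 1$), and your closing remark about $\sqrt{B\log t}$ versus $B\sqrt{\log t}$ is well observed.
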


  %
  %


  The following lemma provides the desired bound on $\|\Q\|$.

  \begin{lemma}\label{lem: Q}
    Suppose that the matrix $\W$ is constructed according to Cases 1 through 6
    for a matrix $\A$ sampled from the planted dense submatrix
    model with $m \le n$ and $M \le N$. Then there is a constant $C_Q > 0$  such that
    \begin{equation*} 
       \|\Q \|
       \le C_Q  \max
      \bra{ \sqrt{\sigma_q^2 n \log N} , \sqrt{\frac{n}{m} } \log N }
    \end{equation*}
    with high probability.
  \end{lemma}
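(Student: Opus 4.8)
The plan is to restrict attention to the $\bar U\times\bar V$ block of $\W$ — off this block $\Q$ vanishes, so $\|\Q\| = \|\Q(\bar U,\bar V)\|$ — then split this block into a mean-zero random matrix plus a rank-two ``systematic'' perturbation, and estimate each piece separately: the random matrix via Lemma~\ref{lem: Tropp}, and the rank-two piece via the entrywise deviation bounds~\eqref{eq: y bound} and~\eqref{eq: z bound} already established for $\y$ and $\z$.

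First I would read off the block entries. Cases~1 and~2 in the construction of $\W$ give, for $(i,j)\in\bar U\times\bar V$, $W_{ij} = \tilde\lambda - \gamma(1-a_{ij}) - \Lambda_{ij} = \tilde\lambda - \gamma(1-a_{ij}) - y_i - z_j$. Writing $1-a_{ij} = (1-q) + \big((1-a_{ij})-(1-q)\big)$ and dividing by $\gamma$ yields $\Q(\bar U,\bar V) = \Q' + \Q''$ with $Q'_{ij} = -\big((1-a_{ij})-(1-q)\big)$ and $Q''_{ij} = \big(\tilde\lambda/\gamma-(1-q)\big) - y_i/\gamma - z_j/\gamma$. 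From~\eqref{eq: Ey Ez} one has $\E[y_i]/\gamma + \E[z_j]/\gamma = \tilde\lambda/\gamma - (1-q)$, so $Q''_{ij} = (\E[y_i]-y_i)/\gamma + (\E[z_j]-z_j)/\gamma$; hence $\Q'' = \b\e^T + \e\w^T$, where $b_i := (\E[y_i]-y_i)/\gamma$ for $i\in\bar U$ and $w_j := (\E[z_j]-z_j)/\gamma$ for $j\in\bar V$, a matrix of rank at most two.

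For the random part, the entries $-\big((1-a_{ij})-(1-q)\big)$, $(i,j)\in\bar U\times\bar V$, are i.i.d., mean zero, bounded by $1$, and of variance $\sigma_q^2 = q(1-q)$; applying Lemma~\ref{lem: Tropp} with $t = N$ and $n_{\max} = n$ (recall $m\le n$) gives $\|\Q'\| \le c\max\{\sqrt{\sigma_q^2 n},\sqrt{\log N}\}$ with probability at least $1 - nN^{-7}\ge 1 - N^{-6}$. For the rank-two part, $\|\Q''\| \le \|\b\e^T\| + \|\e\w^T\| = \sqrt{n}\,\|\b\|_2 + \sqrt{m}\,\|\w\|_2 \le \sqrt{mn}\big(\max_{i\in\bar U}|b_i| + \max_{j\in\bar V}|w_j|\big)$. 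Inserting~\eqref{eq: y bound} for $|b_i| = |y_i-\E[y_i]|/\gamma$ and~\eqref{eq: z bound} for $|w_j| = |z_j-\E[z_j]|/\gamma$ — which hold simultaneously over all $i,j$ w.h.p., being consequences of~\eqref{eq: mi}, \eqref{eq: ni}, \eqref{eq: m sum} — and simplifying (for instance $\sqrt{mn}\cdot\sqrt{\sigma_q^2\log N/n} = \sqrt{\sigma_q^2 m\log N}$ and $\sqrt{mn}\cdot\log N/n = \sqrt{m/n}\,\log N$), I would get $\|\Q''\| \le C\,(1+1/\sqrt m)\max\{\sqrt{\sigma_q^2 n\log N},\sqrt{n/m}\,\log N\}$ w.h.p., where the terms coming from the $\b$-block ($\sqrt{\sigma_q^2 m\log N}$ and $\sqrt{m/n}\,\log N$) are absorbed into the stated terms using $m\le n$.

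Finally, combining the two estimates by the triangle inequality and noting $\sqrt{\sigma_q^2 n}\le\sqrt{\sigma_q^2 n\log N}$ and $\sqrt{\log N}\le\log N\le\sqrt{n/m}\,\log N$ — so $\|\Q'\|$ is dominated by $\|\Q''\|$ — and absorbing the bounded factor $1+1/\sqrt m$ into a constant, gives $\|\Q\| \le C_Q\max\{\sqrt{\sigma_q^2 n\log N},\sqrt{n/m}\,\log N\}$ w.h.p., as claimed. I do not expect a genuine analytic obstacle; the one point requiring care is bookkeeping — identifying which of the two max-terms dominates in each of the $\y$- and $\z$-estimates under $m\le n$, and taking a single union bound over the (individually w.h.p.) events behind~\eqref{eq: y bound}--\eqref{eq: z bound} together with the event from Lemma~\ref{lem: Tropp}. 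The real content is just recognizing the rank-two-plus-noise splitting of the block.
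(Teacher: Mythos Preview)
Your proposal is correct and follows essentially the same approach as the paper: split the $\bar U\times\bar V$ block into a mean-zero i.i.d.\ piece handled by Lemma~\ref{lem: Tropp} plus a low-rank correction controlled by the scalar Bernstein bounds. The only difference is organizational: the paper writes the low-rank part as three rank-one pieces $\Q_2=\frac{1}{n}(\bm-(1-q)n\e)\e^T$, $\Q_3=\frac{1}{m}\e(\bn-(1-q)m\e)^T$, $\Q_4=\frac{(1-q)mn-\bm^T\e}{mn}\e\e^T$ and bounds each directly via~\eqref{eq: mi}--\eqref{eq: m sum}, whereas you collapse these into the single rank-two matrix $\b\e^T+\e\w^T$ and invoke the already-proved consequences~\eqref{eq: y bound}--\eqref{eq: z bound}; your packaging is slightly more economical but the content is identical.
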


  We delay the proof of Lemma~\ref{lem: Q} until Appendix~\ref{app:Q-bound}.
  The following lemma provides the required bound on $\|\S \|$.
  Our analysis follows a similar argument to that of~\cite[Section 4.2]{ames2011nuclear}; we include it here
  for completeness.

  \newcommand{\ts}{\tilde\sigma^2_p}
  \begin{lemma}\label{lem: S bound}
    Suppose that the matrix $\W$ is constructed according to Cases 1 through 6
    for a matrix $\A$ sampled from the planted dense submatrix
    model with $m \le n$ and $M \le N$.
    Let $\tilde\sigma^2_p:= p/(1-p)$ and $B := \max \{1, \ts \}$.
    Assume that $p$ is bounded away from $1$ so that $B = O(1)$.
    Then exists constant $C_S > 0$  such that
    \begin{equation*} 
       \|\S \| \le  { C_S  } \max \bra{ \sqrt{ \ts N \log N}, \log N \\ } \sqrt{\log N}
    \end{equation*}
    with high probability.
  \end{lemma}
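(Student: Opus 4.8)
The plan is to bound $\|\S\|$ by splitting $\S$ into a mean-zero random part and a low-rank deterministic correction, applying Lemma~\ref{lem: Tropp} to the former and estimating the operator norm of the latter directly. Recall that $\lambda \S$ collects the entries of $\W$ outside the $\bar U \times \bar V$ block, so the entries of $\S$ are determined by Cases~3 through~6. In Case~4 (the bulk of the matrix: $i \notin \bar U$, $j \notin \bar V$) the entry is $-\lambda p/(1-p)$ if $(i,j) \in \Omega$ and $\lambda$ if $(i,j) \notin \Omega$; after dividing out $\lambda$, the Case~4 block of $\S$ has i.i.d.\ entries equal to $1$ with probability $p$ and $-p/(1-p)$ with probability $1-p$, hence mean zero, variance $p/(1-p) = \ts$, and magnitude bounded by $B = \max\{1,\ts\}$. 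So the dominant $(M-m)\times(N-n)$ block of $\S$ is exactly of the form covered by Lemma~\ref{lem: Tropp}, and applying it with $t = N$ yields a contribution of order $\max\{\sqrt{\ts N_{\max}}, \sqrt{B\log N}\}$ with high probability; since $B = O(1)$ this is $O(\max\{\sqrt{\ts N}, \sqrt{\log N}\})$.

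Next I would handle the remaining rows and columns, namely the Case~5 block ($i \in \bar U$, $j \notin \bar V$, with entries $-\lambda \nu_j/(m-\nu_j)$) and the Case~6 block ($i \notin \bar U$, $j \in \bar V$, with entries $-\lambda \mu_i/(n-\mu_i)$), together with the Case~3 entries. The Case~5 block occupies only $m$ rows, and within the $j$th column its $m$ entries are all equal to the same value $-\lambda\nu_j/(m-\nu_j)$; so this block has rank one in a trivial sense per column, but more usefully its operator norm is at most $\sqrt{m}$ times the $\ell_2$ norm of the row vector $(\nu_j/(m-\nu_j))_j$. Using the Bernstein bound~\eqref{eq: Bernstein} we have $\nu_j \approx pm$ for all $j$ with high probability, so each such entry is $\approx p/(1-p)$ in magnitude, giving a block norm of order $\sqrt{m}\cdot\sqrt{N}\cdot\ts = O(\sqrt{mN}\,\ts)$, which is dominated by the Case~4 term up to the $\sqrt{\log N}$ factor once we also account for the deviations $|\nu_j - pm| \lesssim \max\{\sqrt{\ts m\log N}, \log N\}$. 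Symmetrically the Case~6 block contributes a term of the same order with $m,n$ and $M,N$ interchanged, which under $m\le n$, $M\le N$ is again subdominant. The Case~3 entries all equal the constant $\lambda$; that block is rank one with norm $\lambda$ times the square root of the number of such entries, which is at most $\lambda\sqrt{MN}$ --- but in fact these are the complement-of-$\Omega$ entries outside $\bar U\times\bar V$, occurring with probability $p$, so a further mean-zero/deterministic split (already partially absorbed into the Case~4 accounting) shows the deterministic part has norm $O(\lambda p\sqrt{MN})$, which combines with the earlier estimates. Collecting all pieces and using $\lambda = O(1/\sqrt{mn} + \gamma)$ together with the gap and $\gamma$ choices to control the scaling, one obtains $\|\S\| \le C_S \max\{\sqrt{\ts N\log N}, \log N\}\sqrt{\log N}$ with high probability, matching the claimed bound (the extra $\sqrt{\log N}$ factor coming from using $t = N$ in Lemma~\ref{lem: Tropp} so that the failure probability $N_{\max}t^{-7}$ is summable, and from the union bound over the $O(N)$ applications of Bernstein).

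The main obstacle I anticipate is bookkeeping the interaction between the random Case~4 block and the deterministic rank-one/low-rank corrections in Cases~3,~5, and~6 without double-counting: the cleanest route is to write $\S = \S_{\mathrm{rand}} + \S_{\mathrm{det}}$ where $\S_{\mathrm{rand}}$ has i.i.d.\ mean-zero entries on \emph{all} of $[M]\times[N]$ (extending the Case~4 law to the full matrix) and $\S_{\mathrm{det}}$ is supported on the $m$ rows indexed by $\bar U$, the $n$ columns indexed by $\bar V$, and captures the mean shifts; then $\|\S_{\mathrm{rand}}\|$ is handled by Lemma~\ref{lem: Tropp} in one shot, and $\|\S_{\mathrm{det}}\|$ is bounded by treating its row-support and column-support blocks separately as above, each being essentially a rank-$\le 2$ perturbation whose norm is controlled by Bernstein-type concentration of the $\mu_i$ and $\nu_j$. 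One must also verify that the scalar $\lambda = 1/\sqrt{mn} + \gamma(1-q) + \gamma\tau$ is $O(1)$ under the hypotheses (since $\gamma = O(1/((q-p)n_{\min}))$ and $\tau = O(1)$ given $m \le n$), so that pulling $\lambda$ out front does not inflate any of the estimates; this follows from the gap condition~\eqref{eq: dsm gap}. Modulo these routine-but-delicate estimates, the structure of the argument is exactly that of~\cite[Section 4.2]{ames2011nuclear}, as the statement of the lemma indicates.
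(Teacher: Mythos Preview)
Your overall decomposition strategy is correct and matches the paper: extend the Case~4 law to the entire matrix to form an i.i.d.\ mean-zero matrix (the paper's $\tilde\S_1$), bound it via Lemma~\ref{lem: Tropp}, and then control the correction on the $\bar U$-rows and $\bar V$-columns. The gap is in how you bound that correction.

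You assert that the Case~5 block has columns that are constant (all $m$ entries equal to $-\lambda\nu_j/(m-\nu_j)$), and later that the correction $\S_{\mathrm{det}}$ on those rows is ``essentially a rank-$\le 2$ perturbation''. Neither is true. For $i\in\bar U$, $j\notin\bar V$, the entries of $\S$ alternate between $1$ (Case~3, where $(i,j)\notin\Omega$) and $-\nu_j/(m-\nu_j)$ (Case~5, where $(i,j)\in\Omega$); after subtracting the i.i.d.\ part, the correction in column $j$ equals $\bigl(p/(1-p)-\nu_j/(m-\nu_j)\bigr)$ \emph{only on the $m-\nu_j$ positions in $\Omega$} and is zero elsewhere. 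The column value is the same across those positions, but the support pattern varies with $i$, so the matrix is not low-rank. Consequently your rank-one estimate $\sqrt{m}\cdot\|(\nu_j/(m-\nu_j))_j\|_2$ does not apply, and the natural fallback---a Frobenius bound using scalar Bernstein on each $\nu_j$---gives only
\[
\|\tilde\S_3\|_F \;=\; O\!\left(\max\Bigl\{\sqrt{\ts N\log N},\;\sqrt{N/m}\,\log N\Bigr\}\right),
\]
whose second term exceeds the target $(\log N)^{3/2}$ whenever $m = o(N/\log N)$, which is precisely the interesting regime.

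The paper closes this gap with the Matrix Bernstein Inequality (Theorem~\ref{thm: mat bern}), writing the correction as $\sum_j \d_j\e_j^T$ with $\d_j$ the $j$th column, and computing the matrix variance $v(\Z)=O(\ts N)$; this is what produces the $\max\{\sqrt{\ts N}\log N,(\log N)^{3/2}\}$ bound (Lemma~\ref{lem: thm 4}). That step is the substantive content of the lemma and is missing from your outline.
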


  It follows immediately from Lemmas~\ref{lem: Q}~and~\ref{lem: S bound}
  that
  \begin{equation} \label{eq: W int}
    \|\W \| \le {C_Q}   \gamma \max\bra{ \sqrt{ \sigma_q^2 n \log N}, \sqrt{\frac m n } \log N }
      + C_S \, \lambda \, \max \bra{ \sqrt{\ts N} \log N, (\log N )^{3/2}}
  \end{equation}
  w.h.p.
  On the other hand,
  \begin{align*}
    \lambda &= \frac{1}{\sqrt{mn}} + \gamma (1-q) + \gamma \tau 
    \le \frac{1}{\sqrt{mn}} \rbra{1 + \frac{\tilde c}{(q-p) \sqrt mn} 1 - q + \frac{1}{2}(q-p)}  
    \le \frac{\tilde c}{(q-p)\sqrt{mn} } (1-p),
  \end{align*}
  where we obtain the first inequality by substituting the choice of $\gamma$ given by~\eqref{eq: gamma}
  and the upper bound  $\tau \le 3(q-p)/2 \le \tilde c (q-p)/2$.
  We obtain the last inequality using  the fact that $(1/\tilde c + 1/2)(q-p) \le q-p$.
  Further, we have
  $$
    \lambda \tilde\sigma_p \le \frac{\tilde{c}  (1-p) }{(q-p)\sqrt{mn} } \sqrt{ \frac{p}{1-p} }
    = \frac{\tilde c \sqrt{ p (1-p)}}{(q-p)\sqrt{mn} } = \frac{\tilde c \sigma_p}{(q-p)\sqrt{mn} } .
  $$
  Substituting back into~\eqref{eq: W int}, we see that
  \begin{align}
    \|\W \| &= O \rbra{ \frac{1}{(q-p)\sqrt{mn} } \rbra{ \max\bra{\sqrt{\sigma_q^2 n \log N}, \sqrt{\frac{n}{m}}\log N }
        + \max \bra{\sqrt{\sigma_p^2 N} \log N, (\log N )^{3/2}  } }}\notag \\
        &= O \rbra{ \frac{1}{q-p} \max \bra{ \sqrt{\frac{ \sigma_q^2 \log N}{m}},
          \sqrt{ \frac {\sigma_p^2 N }{mn}} \log N,
          \frac{(\log N )^{3/2}}{m} } } \label{eq: W bound}
  \end{align}
  w.h.p. Enforcing $q-p$ so that~\eqref{eq: gap} holds and the right-hand side of~\eqref{eq: W bound}
  is bounded above by $1$ establishes Theorem~\ref{thm: dsm rec}. This completes the proof.
  \qed

\section{A First-Order Method Based on the Alternating Direction Method of Multipliers}
\label{sec:expts}

We conclude with discussion of an optimization algorithm for solution of~\eqref{eq: dsm rel} based on the alternating direction method of multipliers (ADMM); see~\cite{boyd2011distributed} for details regarding the ADMM. We first present a derivation of the method and then empirically validate its performance using randomly generated matrices and real-world collaboration and communication networks.

\subsection{The Optimization Algorithm}
\newcommand{\Z}{\bs{Z}}
To apply the ADMM to~\eqref{eq: dks rel}, we first introduce artificial variables $\Q$, $\W$, $\Z$ to obtain the equivalent convex optimization problem
\begin{equation}\label{eq:split-prob}
  \begin{array}{ll}
    \min & \|\X\|_*+\bs{\gamma}\|\Y\|_1+\mathbbm{1} _{\Omega_Q}(\bs{Q})+\mathbbm{1}_{\Omega_W}(\W)+\mathbbm{1}_{\Omega_Z}(\bs{Z}) \\
    & \X = \Y = \Q,\; \X - \W = \0,\; \X - \Z = \0,
  \end{array}
\end{equation}
where $\Omega_Q,  \Omega_W,  \Omega_Z$ denote the constraint sets
\[
  \Omega_Q := \{\bs{Q} : P_{\tilde{N}}(\bs{Q})=\0 \}, \hspace{0.25in}
  \Omega_W :=\{\W:  \e^T\W\e=mn \, \}, \hspace{0.25in}
  \Omega_Z =\{\bs{Z}: {Z}_{ij}\leq 1  \; \forall (i,j)\in M\times N \, \}.
\]
Here, $\mathbbm{1}_{\S}: \R^{M\times M} \rightarrow \left \{0,+\infty \right \}$  is the indicator function of the set $S \subseteq  \R^{M\times N}$,
such that
$\mathbbm{1}_{S}(\X)=0$  if $\X\in S$, and $+\infty$ otherwise.
We solve~\eqref{eq:split-prob} iteratively using the ADMM.
Specifically, we update each primal variable by minimizing the augmented Lagrangian in Gauss-Seidel fashion with respect to each primal variable. Then the dual variables are updated using the updated primal variables.
The augmented Lagrangian of~\eqref{eq:split-prob} is given by
\begin{align*}
  L_{\tau}=\|\X\|_*+&\bs{\gamma}\|\Y\|_1 +\mathbbm{1}_{\Omega_Q}(\bs{Q})
  +\mathbbm{1} _{\Omega_W}(\W)
  + \mathbbm{1}_{\Omega_Z}(\bs{Z})
  +\tr(\bs{\Lambda_Q}(\X-\Y-\Q))
  +\tr(\bs{\Lambda_W}(\X-\W))\\
  &+\tr(\bs{\Lambda_Z}(\X-\bs{Z}))
  +\frac{\tau}{2} \rbra{\|\X-\Y-\Q\|_F^2 +\|\X-\W\|_F^2+\|\X-\bs{Z}\|_F^2},
\end{align*}
where
$\tau$ is a regularization parameter chosen so that $L_\tau$ is  strongly convex in each primal variable. Minimization of the augmented Lagrangian with respect to each of the artificial primal variables $\bs{Q},\W$ and $\bs{Z}$ is equvalent to projection onto each of the sets $\Omega_Q, \Omega_W$ and $\Omega_Z$; each of these projections has an analytic expression.
%
%
%
%
\newcommand{\M}{\bs{M}}
We update $\Y$ using projection onto the nonnegative cone: $P_{\R^{M\times N}_+}(\M)$ is the matrix with $ij$th entry $m_{ij}$ if $m_{ij}\ge0$ and $0$ otherwise.
On the other hand, we update $\X$ using the proximal function for the nuclear norm $\|\cdot\|_*$, which can be computed by applying the soft thresholding operator defined by
$
  S_\phi(\x) = \textrm{sign}(\x) \max\bra{ |\x| - \phi \e, \bs 0}
$
to the vector of singular values. Here, $\textrm{sign}(\x)$ is the vector whose entries are the signs of the corresponding entries of $\x$, $|\x|$ denotes the vector whose entries are the magnitudes of the corresponding entries of $\x$, and the maximum denotes the vector of pairwise maximums.
We declare the algorithm to have converged when the primal and dual residuals
$\|\X^{l+1}-\W^{l+1}\|_F, \|\X^{l+1}-\bs{Z}^{l+1}\|_F,\|\W^{l+1}-\W^{l}\|_F, \|\bs{Z}^{l+1}-\bs{Z}^{l}\|_F$, and $\|\bs{Q}^{l+1}-\bs{Q}^{l}\|_F$
are smaller than a desired error tolerance. The steps of the algorithm are summarized in Algorithm~\ref{admm}\footnote{A MATLAB implementation of Algorithm~\ref{admm} is available from \href{http://bpames.people.ua.edu/software}{http://bpames.people.ua.edu/software}.}.

\begin{algorithm}[t]
\begin{algorithmic}
\State\textbf{Inputs}
Binary matrix $\A= \R^{M \times N}$, $m\in [M]$, $ n\in [N]$, regularization parameters $\gamma$, $\tau$, and stopping tolerance $\epsilon$
\State\textbf {Initialize}
$\mu=1/\tau$, $\X^{0}=\W^{0}=\Y^{0}=(mn/MN)\e\e^{T}$
\For {$l=0,1,...$ until converged}
\State\textbf{Step 1.}
Update  $\bs{Q}^{l+1}$:
 $\bs{Q}^{l+1}=P_{\Omega} \rbra{\X^{l}-\Y^{l}+\mu \bs{\Lambda_{Q}}^{l}}$
\State\textbf{Step 2.}
Update $\X^{l+1}$: Take SVD  of
\[ \tilde{\X}^{l}=1/3\Big(\Y^{l}+\bs{Q}^{l+1}+\bs{Z}^{l}+\W^{l}-\mu(\bs{\Lambda_Q}^{l}+\bs{\Lambda_Z}^{l}+\bs{\Lambda_W}^{l})\Big)=\bs{U}(\Diag \bs{x})\bs{V}^T.\]
\hspace{0.75in} Apply soft thresholding: $\X^{l+1}=\bs{U} (\Diag S_{\tau/3} (\bs{x}))\bs{V}^T$.
\State\textbf{Step 3.}
Update $\Y^{l+1}$:  $y^{l+1}_{ij} = \max\bra{\sbra{\X^{l+1}-\bs{Q}^{l+1}-\gamma \e \e^T\mu + \bs{\Lambda_Q}^{l}\mu}_{ij}, 0 }$ for all  $i\in [M]$, $j\in[N]$.
\State\textbf{Step 4.}
Update $\W^{l+1}$: $\W^{l+1}=\tilde{\W}^{l}+\alpha^{l}\e\e^{T}$, where $\tilde{\W}^{l}=\X^{l+1}+\mu\bs{\Lambda_W}^{l}$ and $\alpha^{l}=(mn-\e^T\tilde{\W}^{l}\e)/(MN)$.
 \State\textbf{Step 5.}
 Update variable $\bs{Z}^{l+1}:$
 $z_{ij}^{l+1}=\min \{\max \bra{ \sbra{\X^{l+1}+\mu\bs{\Lambda_Z}^{l}}_{ij},0\},1 }$ for all $i\in [M]$, $j\in[N]$.
 \State\textbf{Step 6.}
 Update dual variables:
 \begin{align*}
 \bs{\Lambda_Q}^{l+1} &=\bs{\Lambda_Q}^{l}+\tau(\X^{l+1}-\Y^{l+1}-\bs{Q}^{l+1}), \\
\bs{\Lambda_W}^{l+1} &=\bs{\Lambda_W}^{l}+\tau(\X^{l+1}-\W^{l+1}), \\
\bs{\Lambda_Z}^{l+1} &=\bs{\Lambda_Z}^{l}+\tau(\X^{l+1}-\bs{Z}^{l+1}).
 \end{align*}
 \State\textbf{Step 7.}
 Calculate primal and dual residuals
 \begin{align*}
r_p &=\max \{ \|\X^{l+1}-\bs{Z}^{l+1}\|_F,\|\X^{l+1}-\W^{l+1}\|_F, \|\X^{l+1}-\Y^{l+1}-\bs{Q}^{l+1}\|_F\}/\|\X^{l+1}\|_F,\\
r_d &=\max \{\|\bs{Z}^{l+1}-\bs{Z}^{l}\|_F, \|\W^{l+1}-\W^{l}\|_F, \|\bs{Q}^{l+1}-\bs{Q}^{l}\|_F\} /\|\X^{l+1}\|_F.
\end{align*}
\If {$\max(r_p,r_d)<\epsilon$} \text {algorithm converged.}
\EndIf
\EndFor
\end{algorithmic}
\caption{ADMM for solving relaxation of densest $(m,n)$-submatrix problem~\eqref{eq: dsm rel}}
\label{admm}
\end{algorithm}


%
%
%
%

\subsection{Random matrices}
We empirically verified the theoretical phase transitions provided by Theorem~\ref{thm: dsm rec} using matrices randomly sampled from the planted dense subgraph model with fixed noise edge probability $p$ and varied the submatrix size $n$ and in-submatrix probability $q$.
We perform two sets of experiments: one where the matrix is sparse outside the planted submatrix and another when the noise obscuring the planted submatrix is relatively dense.
For the dense graph simulations, we choose $p=0.25$ and $q \in \{0.25, 0.30, \dots, 0.95, 1\}$.
In the sparse experiments, we choose $p = 1/\sqrt{N}$ and $q = t p$ for ten equally spaced $t$ spanning the interval $[2,\sqrt{N}]$.
For each set of simulations, we vary $n \in \{10, 20, 30, \dots, 240,250\}$ and set $m = 2n$.
In the sparse experiments, we have $M = N = 1000$ and we use $M = N = 500$ in dense experiments.
In both the dense and sparse graph simulations, we generate $10$ matrices according to the planted dense submatrix model for each choice of the parameters $q$ and $n$ (with remaining parameters $p$, $M$, and $N$ chosen as described above).
 We call Algorithm~\ref{admm} to solve the instance of~\eqref{eq: dsm rel} corresponding to each randomly sampled matrix.
The regularization parameter $\gamma=6/(q-p)n$, augmented Lagrangian parameter $\tau=0.35$, and stopping tolerance $\epsilon=10^{-4}$ are used in each call of Algorithm~\ref{admm}. We declared the planted submatrix to be recovered if the relative error $\|\X^* - \X^0\|_F /\|\X^0\|_F$ is less than $10^{-3}$, where $\X^*$ is the solution returned by Algorithm~\ref{admm} and $\X^0$ is the matrix representation of the planted submatrix.
The empirical probability of recovery of planted submatrix  is plotted in  Figure~\ref{fig:recovery}. Color of a square indicates  rate of recovery in the corresponding simulations, with black corresponding to $0$ and white corresponding to $10$ recoveries out of $10$ trials. The dashed curves show the phase transition to perfect recovery predicted by Theorem~\ref{thm: dsm rec}.
The empirical recovery rates observed in these trials closely matches that predicted by Theorem~\ref{thm: dsm rec}. The discrepancy between the observed phased transition and that more conservatively predicted by Theorem~\ref{thm: dsm rec} is due to the presence of the logarithmic terms in~\eqref{eq: gap 1}; a slight modification of our proof to follow that of \cite[Theorem~7]{ames2011nuclear} eliminates these terms when $p$ and $q$ are constants and the gap $q-p$ is sufficiently large.

\begin{figure}[t]
	\centering
  \subfloat[Dense case: $A=500 \times 500$, $p=0.25$ ] {%
		\includegraphics[height=2in]{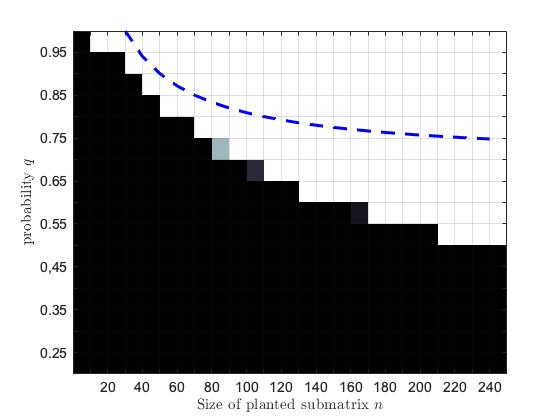}}%
		\qquad
	\subfloat[Sparse case: $A=1000\times 1000$, $p=1/\sqrt{N}$]{%
	\label{fig:sparse}
		\includegraphics[height=2in]{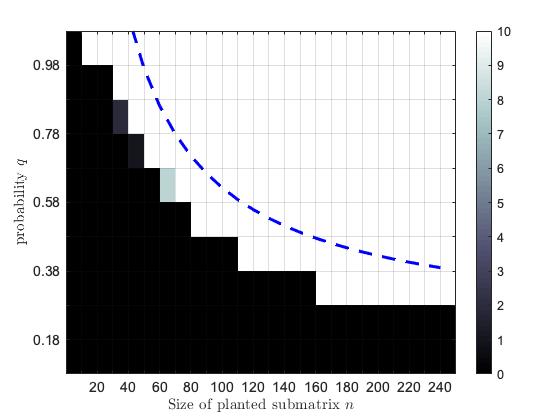}}%
	\caption{Recovery rates for randomly generated matrices.}\label{fig:recovery}
\end{figure}
\newpage

\subsubsection{Collaboration and Communication Networks}
We also applied our algorithm to identify communities in networks taken from the 10th DIMACs Implementation Challenge, which focused on graph partitioning and clustering~\cite{bader2012graph,bader2014benchmarking}.
The first graph (JAZZ)  represents a collaboration network with $198$ musicians and $2742$ edges, and was compiled by ~\cite{gleiser2003jazz}. Here, two musicians are connected if they have performed together. Earlier studies~\cite{tsourakakis2013denser} showed that this network contains a cluster of $100$ musicians. We apply Algorithm~\ref{admm} to the adjacency matrix of this network with  regularization parameter $\tau=0.85$, stopping tolerance $\epsilon = 10^{-2}$, and $m=n = 100$. Our algorithm converges to the dense submatrix representing this community after $50$ iterations.
Figure~\ref{fig:JAZZ} is a visualization of this network using the software package Gephi~\cite{bastian2009gephi} and the ForceAtlas2 algorithm~\cite{jacomy2014forceatlas2}. The \texttt{statistics} function of Gephi is used to identify three communities within this network, including the community of size $n=100$ identified by Algorithm~\ref{admm}.

\begin{figure}[t]
\graphicspath{ {/home/user/research} }
	\centering
	\subfloat[JAZZ Network] {%
		\label{fig:jazz} %
		\includegraphics[height=2 in]{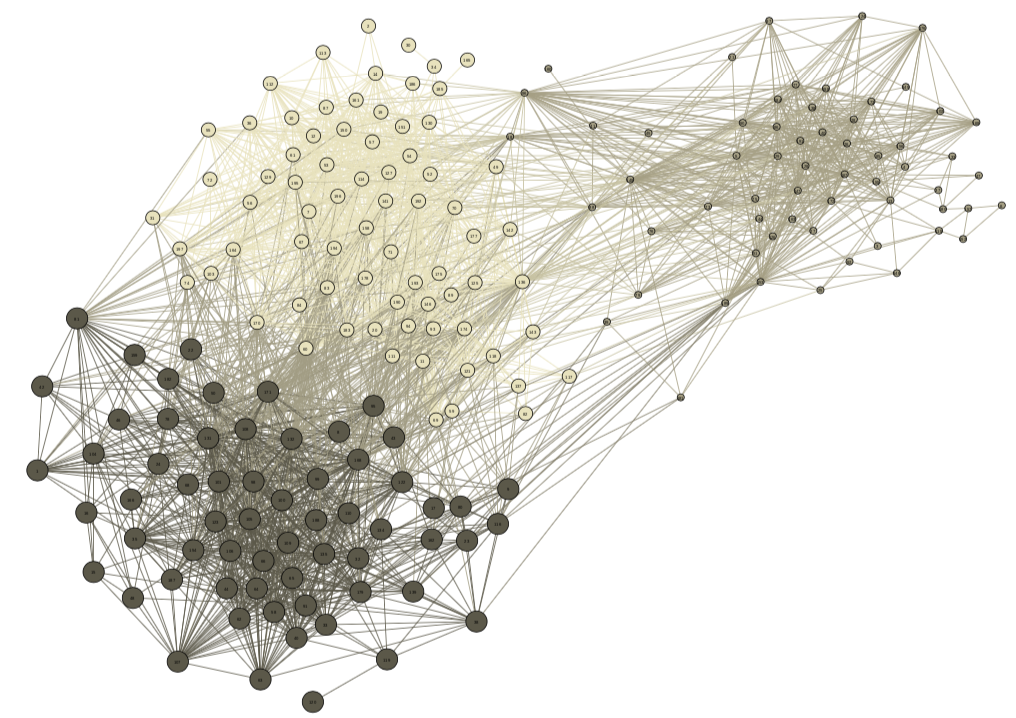}}
		\qquad
	\subfloat[Extracted communities]{%
	\label{fig:communities}
		\includegraphics[height=2 in]{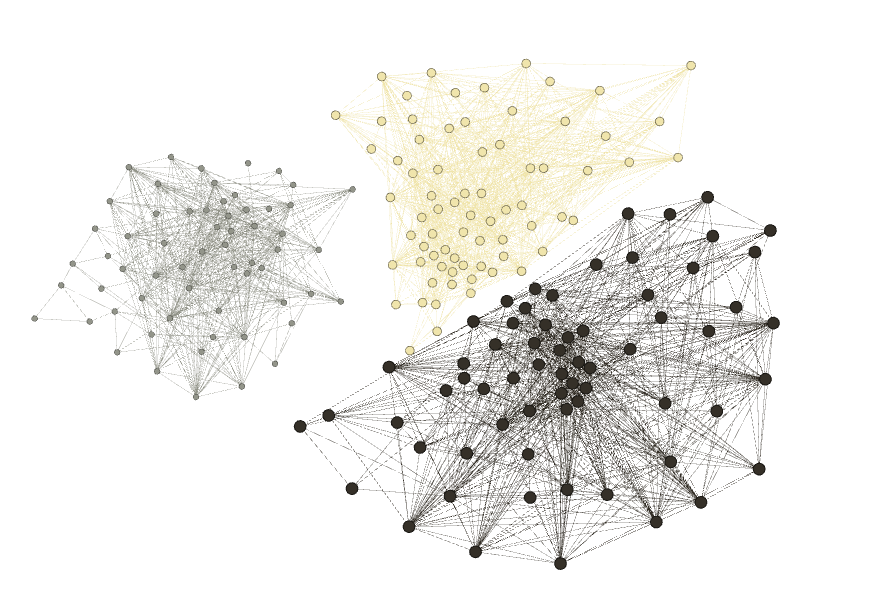}}%
			\caption{JAZZ Network. Each color corresponds to membership in one of 3 clusters, isolated in the plot on the right.}
      \label{fig:JAZZ}
\end{figure}

We also consider the graph (EMAIL) representing the network of e-mail interchanges between faculty, researchers, technicians, managers, administrators, and graduate students of the Univeristy Rovira i Virgili (Tarragona). Two individuals are connected if they exchanged an email. There are $1133$ nodes and $5451$ edges. From~\cite{Tsourakakis2013}, we know that the EMAIL graph has a dense subgraph of $289$ vertices, representing a community of $289$; additionally, we can identify $7$ clusters using the \texttt{statistics} function of Gephi corresponding to academic units within this university, including this community. Applying Algorithm~\ref{admm} with $m=n= 289$, $\tau=0.35$, and stopping criteria $\epsilon=10^{-2}$ finds this subgraph in $15$ iterations. 
The results of these analyses are summarized in Table~\ref{table:real}.

\begin{table}[t!]
\centering
\begin{tabular}{c c c c c} 
\toprule
Graph & Number of vertices & Number of edges & Size of dense subgraph  & Running time \\\midrule
JAZZ & 198 & 2742 & 100 & 0.605735 sec\\ 
EMAIL & 1133 & 5451 & 289 &  20.139186 sec \\ [1ex] 
\hline 
\end{tabular}
\caption{Densest subgraphs extracted with ADMM.}
\label{table:real}
\end{table}

\section{Conclusions}

We have presented an analysis of new convex relaxations for the densest subgraph and submatrix problems and have  established sufficient conditions under which the optimal solution of original combinatorial problem coincides with that of these relaxations. In particular, these sufficient conditions characterize a signal-to-noise (SNR) ratio for matrices sampled from a particular distribution of random matrices, such that if this ratio is sufficiently large then we can expect to recover the combinatorial solution from the solution of the relaxation. Here, we expect perfect recovery if the strength of signal, as measured by the gap between probabilities of existence of nonzero within-group edges and out-group entries, is sufficiently larger than noise, as measured by variability of presence of nonzero entries. Further, the SNR corresponding to this phase transition to perfect recovery matches the current state of the art identified in the previous literature (up to constant and logarithmic terms); see~\cite{brennan2019universality}.

This recovery  guarantee provides a \emph{sufficient} condition for perfect recovery of the planted subgraph or submatrix. It would be very interesting to determine if this condition is also \emph{necessary}. For example, we establish that we have perfect recovery of planted dense $k$-submatrices and subgraphs if $k \ge \Omega(N)$ when the probabilities $q$, $p$ are sufficiently large constants. It is unclear if it is possible, either using our relaxation or some other method, to efficiently recover planted submatrices and subgraphs of size $O(n^{1/2 - \epsilon})$ for some $\epsilon >0$.

A secondary open problem focuses on efficient solution of the proposed convex relaxations. We currently solve these problems using a multi-block variant of the ADMM. Each iteration of this algorithm requires $O(N^3)$ arithmetic operations; the bulk of these operations are used by the calculation of the singular value decomposition used to update $\X$. This per-iteration cost scales unfavorably when $N$ is large.  The recent manuscript by Sotirov~\cite{sotirov2019solving} proposed a coordinate descent heuristic for the densest $k$-subgraph, with empirical evidence that this heuristic efficiently solves large-scale instances of the densest $k$-subgraph problem. However, sufficient conditions for perfect recovery of a planted dense submatrix have not yet been established for this method has not been. Further research is needed to design efficient and scalable algorithms, i.e.,  with per-iteration cost $O(N)$, with provable theoretical guarantees of recovery for the solution of the densest subgraph and submatrix problems.

\noindent \textbf{Acknowledgements.}
B.~Ames was supported by University of Alabama Research Grants RG14678 and RG14838.

\appendix

\section{Proof of Lemma~\ref{lem: Q}}
\label{app:Q-bound}

The proof is virtually identical to that of~\cite[Lemma~4.5]{ames2015guaranteed}.
We decompose $\Q$ as $\Q = \lambda \e\e^T- \bs H + \y\e^T + \e\z^T$,
where $\bs H$ is matrix defined by
$H_{ij} = 1$ if $ij \in \Omega$ and $H_{ij} = 0$ otherwise.
We can further decompose $\Q$ as
$
  \Q = \Q_1 + \Q_2 + \Q_3 + \Q_4,
$
where $\Q_1, \Q_2, \Q_3, \Q_4$ are constructed as  below.

We first bound $\Q_1 := (1-q) \e\e^T - \bs H$.
Note that $\Q_1$ has i.i.d.~mean-zero entries, with variance
$\sigma^2 = \sigma_q^2$ and values either $1-q$ with probability $q$
or $-q$ with probability $1-q$.
Applying~\eqref{eq: Tropp} with $B = 1$ and $t = N$,
\begin{align}
  \|\Q_1\| = O \rbra{ \max\bra{\sqrt{\sigma_q^2 n }, \sqrt{\log N} }}\label{eq: Q1}
\end{align}
w.h.p.
Next, we let $\Q_2 := \frac{1}{n}(\bm \e^T - (1-q) n\e\e^T )$.
Note that
\begin{align*}
  \|\Q_2 \|_2 &= \frac{1}{n} \| (\bm - (1-q)n \e) \e^T \|_2 
  \le \frac{1}{n} \| \bm - (1-q)n \e \| \|\e \| 
   = \frac{1}{\sqrt{n}} \| \bm - (1-q) n \e \|. 
\end{align*}
Applying~\eqref{eq: mi} shows that
$
  \bar\mu_i - (1-q) n \le 6 \max \bra{ \sqrt{\sigma_q^2 n\log N}, \log N }
$
for all $i \in \bar U$
w.h.p. It follows that
\begin{equation} \label{eq: Q2}
  \|\Q_2\|  \le 6 \sqrt{ \frac{m}{n} }\max \bra{ \sqrt{\sigma_q^2 n\log N}, \log N }
\end{equation}
w.h.p.
Next, let $Q_3 := \frac{1}{m} \e \bn^T - (1-q) \e\e^T$.
An identical argument shows that
\begin{equation} \label{eq: Q3}
  \|\Q_3\| \le 6 \sqrt{\frac{n}{m}} \max \bra{ \sqrt{\sigma_q^2 m\log N}, \log N }
\end{equation}
w.h.p.
Finally, we let
\[
  \Q_4 := \rbra{ \frac{(1-q) mn - \bm^T \e}{mn} } \e\e^T.
\]
It is easy to confirm that $\gamma ( \Q_1 + \Q_2 + \Q_3 + \Q_4) = \W(\bar U, \bar V)$.
Applying~\eqref{eq: m sum} shows that
\begin{equation} \label{eq: Q4}
  \|\Q_4 \| \le \frac{1}{\sqrt{mn}} 6 \max \bra{ \sqrt{\sigma_q^2 mn \log N }, \log N }
\end{equation}
w.h.p.
Combining~\eqref{eq: Q1},~\eqref{eq: Q2},~\eqref{eq: Q3}, and~\eqref{eq: Q4}
establishes that
\[
  \|\Q \| \le \sum_{i=1}^4 \|\Q_i \| = O\rbra{  \max
  \bra{ \sqrt{\sigma_q^2 n \log N}, \sqrt{\frac{n}{m} } \log N } }
\]
w.h.p., as required.
\qed

\section{Proof of Lemma~\ref{lem: S bound}}
\label{app:S-bound}

To obtain the desired bound on $\S$, we first approximate $\S$ with a random matrix with mean zero entries.
In particular, we let $\tilde \S_1$ be the random matrix constructed as follows.
For all $(i,j) \notin \bar U \times \bar V$ such that $ij \notin \Omega$, or $(i,j) \in ([M]- \bar U)\times ([N]-\bar V)$ such that $(i,j) \in \Omega$,
we let ${[\tilde \S_1]}_{ij} = S_{ij}$.
All remaining entries of $\tilde \S_1$ are sampled independently from the generalized Bernoulli distribution $\mathcal{B}$, where $x$ sampled from $\mathcal{B}$
satisfy
$$
  x = \branchdef{\lambda , &\mbox{with probability } p, \\
  -\lambda \, \ts, & \mbox{with probability } 1-p . }
$$
Note that $\tilde \S_1$ is a random matrix with i.i.d.~mean zero entries
sampled independently from $\mathcal{B}$ by our choice of $\W$.
Applying Lemma~\ref{lem: Tropp} shows that
\begin{equation} \label{eq: S1}
  \|\tilde \S_1 \| = O\rbra{  \max \bra{ \sqrt{ \ts N}, \sqrt{B \log N} } }
\end{equation}
w.h.p.
The remainder of the proof establishes that $\S$ is well-approximated by $\tilde \S_1$, i.e., we complete the proof by bounding  the norm of the error $\S - \tilde\S_1$.
We begin with the error in the $\bar U \times \bar V$ block.
Let $\tilde \S_2 = -\tilde \S_1(\bar U, \bar V)$. Applying Lemma~\ref{lem: Tropp} with $t = N$ again
shows that
\begin{equation} \label{eq: S2}
  \|[\S - \tilde \S_1](\bar U, \bar V)\| = \|\tilde \S_2\|
  = O \rbra{  \max \bra{ \sqrt{ \ts n}, \sqrt{B \log N} } }\end{equation}
w.h.p.
We define  $\tilde \S_3$  by
$$
  {[\tilde \S_3]}_{ij} = \branchdef{ -\frac{ \nu_j}{m - \nu_j} + \frac{ p}{1-p},
  &\mbox{if } (i,j) \in \Omega, \; i \in \bar U, j\in [N] -\bar V, \\ 0, &\mbox{otherwise.} }
$$
To bound the norm of $\tilde \S_3$, we will use the
following lemma, which provides a bound on the spectral norm
of random matrices of this form.

\newcommand{\bern}{\mathcal{B}}

\begin{lemma}\label{lem: thm 4}
  Let $\A$ be an $n \times N$ matrix whose entries
  are chosen according to $\bern$ with $n \le N$.
  Let $\tilde\A$ be the random matrix defined by
  \[
    {[\tilde A]}_{ij} :=
    \begin{cases}
      1, & \text{if } A_{ij} = 1, \\
      \frac{-n_j }{n-n_j}, & \text{if } A_{ij} = -\ts,
    \end{cases}
  \]
  where $n_j$ is the number of $1$'s in the $j$th column of $\A$.
  Then there are constants $c_1, c_2 > 0$ such that
  \[
    \Pr \rbra{ \|\A - \tilde \A \| \ge
    c_1 \max \bra{ \sqrt{\ts N} \log N, (\log N)^{3/2} } } \le c_2 N^{-5}.
  \]
\end{lemma}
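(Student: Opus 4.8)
The plan is to compute $\A-\tilde\A$ in closed form, recognize it as a structured random matrix, and bound its spectral norm by peeling off a rank-one term. From the definitions, $(\A-\tilde\A)_{ij}=0$ whenever $A_{ij}=1$, while whenever $A_{ij}=-\ts$ (recall $\ts=p/(1-p)$),
\[
  (\A-\tilde\A)_{ij}=\frac{n_j}{n-n_j}-\ts=\frac{n_j-pn}{(1-p)(n-n_j)}=:d_j ,
\]
so every nonzero entry of column $j$ equals the single scalar $d_j$, and $\A-\tilde\A=\bs{P}\,\Diag(\d)$, where $\bs{P}\in\{0,1\}^{n\times N}$ is the indicator of the $-\ts$ positions (hence has i.i.d.\ Bernoulli$(1-p)$ entries) and $\d=(d_1,\dots,d_N)^T$. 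Centering $\bs{P}=(1-p)\e\e^T+\bs{G}$ with $\bs{G}$ i.i.d.\ mean zero of variance $\sigma_p^2=p(1-p)$ and magnitude at most $1$, and using $\e\e^T\Diag(\d)=\e\d^T$, gives
\[
  \A-\tilde\A=(1-p)\,\e\,\d^T+\bs{G}\,\Diag(\d).
\]
(I may assume $n=\Omega(\log N)$: for smaller $n$, $\bs{P}$ has an all-ones column with non-negligible probability and $\tilde\A$ is degenerate, a regime excluded in every application of the lemma, where $n$ is polylogarithmic in $N$ by the gap condition.)

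Next I would estimate the two pieces with the tools already in hand. Applying Bernstein's inequality (Lemma~\ref{lem: Bernstein}, $t=N$) to $n_j\sim\mathrm{Bin}(n,p)$ and to $n-n_j\sim\mathrm{Bin}(n,1-p)$ and union bounding over the $N$ columns yields, w.h.p., $|n_j-pn|\le 6\max\{\sqrt{\sigma_p^2 n\log N},\log N\}$ and $n-n_j\ge\tfrac{1}{2}(1-p)n$ for all $j$, hence $\|\d\|_\infty\le \frac{C}{(1-p)^2}\max\{\sqrt{\sigma_p^2\log N/n},\,\log N/n\}$. For the $\ell_2$ norm I would \emph{not} use $\|\d\|_2\le\sqrt N\,\|\d\|_\infty$, which is too lossy; instead I would apply the general Bernstein inequality to the sum of independent bounded variables $\sum_{j=1}^N(n_j-pn)^2$, which concentrates around $\sigma_p^2 nN$ up to an $O((\log N)^3)$ tail, so that $\|\d\|_2=O\bigl(\tfrac{1}{(1-p)^2}(\sqrt{\sigma_p^2 N/n}+(\log N)^{3/2}/n)\bigr)$ w.h.p. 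Finally, Lemma~\ref{lem: Tropp} applied to $\bs{G}$ (with $n_{\max}=N$, entry bound $1$, $t=N$) gives $\|\bs{G}\|=O(\max\{\sqrt{\sigma_p^2 N},\sqrt{\log N}\})$ w.h.p.

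Assembling: $\|(1-p)\e\d^T\|=(1-p)\sqrt n\,\|\d\|_2=O(\sqrt{\ts N}+(\log N)^{3/2})$ after substituting $\sigma_p^2=p(1-p)$ and using that $p$ is bounded away from $1$; and $\|\bs{G}\,\Diag(\d)\|\le\|\bs{G}\|\,\|\d\|_\infty=O(\max\{\sqrt{\ts N}\log N,(\log N)^{3/2}\})$ after the same simplifications. A union bound over the $O(1)$ high-probability events above then gives $\|\A-\tilde\A\|=O(\max\{\sqrt{\ts N}\log N,(\log N)^{3/2}\})$ with probability at least $1-c_2 N^{-5}$, which is the claim. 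I expect the $\ell_2$-estimate of $\d$ to be the main obstacle: a crude union bound overcounts the rare columns with large $|n_j-pn|$ and loses a logarithmic factor, so one must exploit the concentration of $\sum_j(n_j-pn)^2$ to recover the sharp $(\log N)^{3/2}$ term; the rest is bookkeeping to collapse the $\sigma_p^2$- and $\ts$-dependent quantities into the stated form, which uses $\ts=O(1)$. The structure parallels \cite[Section~4.2]{ames2011nuclear}, the new ingredient being the asymmetry of $\mathcal{B}$ (values $1$ and $-\ts$ rather than $\pm1$), which accounts for the $(1-p)$ factors throughout.
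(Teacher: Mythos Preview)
Your argument is correct and follows a genuinely different route from the paper's. The paper writes $\A-\tilde\A=\sum_{j=1}^N\d_j\e_j^T$ column by column and applies the Matrix Bernstein Inequality of Tropp: it bounds $L=\max_j\|\d_j\|$ via scalar Bernstein on each $n_j$, computes the matrix variance $\max\bigl\{\|\sum_j\E[\d_j\d_j^T]\|,\ \max_j\E\|\d_j\|^2\bigr\}=O(\ts N)$ using $\E[\|\d_j\|^2]\le c\,\ts$, and reads the tail bound off directly from~\eqref{eq: MBI}. Your factorization $\A-\tilde\A=\bs P\,\Diag(\d)$ with $\bs P=(1-p)\e\e^T+\bs G$ avoids matrix concentration altogether: you need only Lemma~\ref{lem: Tropp} for $\|\bs G\|$, the deterministic estimate $\|\bs G\,\Diag(\d)\|\le\|\bs G\|\,\|\d\|_\infty$ (so the dependence between $\bs G$ and $\d$ is harmless), and scalar concentration for $\|\d\|_2$ and $\|\d\|_\infty$. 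This is more elementary, at the cost of the extra $\ell_2$ step you correctly flag as the main obstacle; the paper's route is shorter once Matrix Bernstein is in hand. One technical remark on that step: the summands $(n_j-pn)^2$ are not almost surely bounded by anything small, so to apply Bernstein to $\sum_j(n_j-pn)^2$ you should either truncate at the high-probability level $C\max\{\sigma_p^2 n\log N,\log^2 N\}$ or invoke the sub-exponential version; either yields your claimed $O(\sigma_p^2 nN+(\log N)^3)$ bound.
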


The proof of Lemma~\ref{lem: thm 4} follows a similar argument
to that of~\cite[Theorem 4]{ames2011nuclear} and is included as
Appendix~\ref{app:thm4-proof}.
It is easy to see that the nonzero block $\tilde \S_3$ has form $\A-\tilde \A$
as in the hypothesis of Lemma~\ref{lem: thm 4}.
It follows that
\begin{equation} \label{eq: S3}
  \|[\S - \bs{S_1}](\bar U, [N]- \bar V)\| = \|\tilde \S_3\|
  = O\rbra{ \max \bra{ \sqrt{\ts N} \log N, (\log N)^{3/2}}  }
\end{equation}
w.h.p.
Similarly, we define the final correction matrix by
$$
  {[\tilde \S_4]}_{ij} = \branchdef{ -\frac{\mu_j}{n - \mu_j} + \frac{p}{1-p},
  &\mbox{if } (i,j) \in \Omega, \; i \in [M] - \bar U, j\in\bar V, \\ 0, &\mbox{otherwise.} }
$$
Applying  Lemma~\ref{lem: thm 4} to the transpose of $\tilde \S_4$
we have
\begin{equation} \label{eq: S4}
  \|\tilde \S_4 \| = O\rbra{  \max \bra{ \sqrt{\ts M} \log M, (\log M)^{3/2}} }
  = O\rbra{ \max \bra{ \sqrt{\ts N} \log N, (\log N)^{3/2}}}
\end{equation}
w.h.p.
Combining~\eqref{eq: S1},~\eqref{eq: S2},~\eqref{eq: S3}, and~\eqref{eq: S4}
and applying the union bound completes the proof.
\qed

\section{Proof of Lemma~\ref{lem: thm 4}}
\label{app:thm4-proof}

The result relies on an application of the Matrix Bernstein Inequality; see~\cite[Theorem~6.1.1]{tropp2015introduction} and \cite[Theorem 1.6]{tropp2012user} for further details. We first state the necessary bound on the spectral norm of the sum of finitely many independent, bounded random matrices.

\renewcommand{\S}{\bs{S}}

\begin{theorem}[Matrix Bernstein Inequality]\label{thm: mat bern}
  Let $\{\S_k\}$ be a finite sequence of independent $d_1 \times d_2$ random matrices such that
  $\E[\S_k] = \0 $ and $\|\S_k\| \le L$ for all $k$
  almost surely.
  Let $\Z := \sum_k \S_k$ and let $v(\Z)$ denote the matrix variance defined by
  \begin{align} 
    v(\Z) & = \max \bra{ \|\E[\Z\Z^*] \|, \|\E[\Z^* \Z]\|} 
      = \max \bra{ \left\| \sum_k \E[\S_k\S_k^*] \right\|, \left\| \sum_k \E[\S_k^* \S_k] \right\| }
      \label{eq: mat var}.
  \end{align}
  Then
  \begin{equation}\label{eq: MBI}
    Pr (\|\Z \| \ge t) \le (d_1 + d_2) \exp \rbra{ \frac{ - t^2/2}{v(\Z) + Lt/3}}
  \end{equation}
  for all $t > 0$.
\end{theorem}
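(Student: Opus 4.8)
The plan is to prove~\eqref{eq: MBI} by the matrix Laplace transform method of Ahlswede--Winter, in the sharpened form due to Tropp~\cite{tropp2012user,tropp2015introduction}. The argument proceeds in four stages: reduction to the Hermitian case by dilation, a matrix Markov inequality, subadditivity of the matrix cumulant generating function, and a per-summand moment bound; the proof then closes by optimizing a scalar parameter.

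First I would reduce the rectangular problem to a Hermitian one via the Hermitian dilation. For each summand set
\[
  \mathcal{H}(\S_k) = \mat{{cc} \0 & \S_k \\ \S_k^* & \0 },
\]
a Hermitian matrix of order $d := d_1 + d_2$. Since $\mathcal{H}$ is linear and $\E[\S_k] = \0$, the dilated summands $\mathcal{H}(\S_k)$ are independent, Hermitian, and mean zero; moreover $\|\mathcal{H}(\S_k)\| = \|\S_k\| \le L$, while $\mathcal{H}(\S_k)^2$ is block diagonal with diagonal blocks $\S_k\S_k^*$ and $\S_k^*\S_k$. Writing $\mathcal{H}(\Z) = \sum_k \mathcal{H}(\S_k)$, this yields $\lambda_{\max}(\mathcal{H}(\Z)) = \|\Z\|$ and $\|\sum_k \E[\mathcal{H}(\S_k)^2]\| = v(\Z)$. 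It therefore suffices to prove the bound for a zero-mean, independent Hermitian sequence bounded by $L$ in operator norm.

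For the Hermitian problem I would invoke the matrix Markov inequality $\Pr(\lambda_{\max}(\mathcal{H}(\Z)) \ge t) \le e^{-\theta t}\,\E\sbra{\tr\exp(\theta\,\mathcal{H}(\Z))}$, valid for every $\theta > 0$, and then control the trace-exponential of the sum. The crucial step is subadditivity of the matrix cumulant generating function, a consequence of Lieb's concavity theorem together with Jensen's inequality, giving
\[
  \E\sbra{\tr\exp\rbra{\textstyle\sum_k \theta\,\mathcal{H}(\S_k)}} \le \tr\exp\rbra{\textstyle\sum_k \log\E\,e^{\theta\,\mathcal{H}(\S_k)}}.
\]
To bound each factor, I would use that for a zero-mean Hermitian $X$ with $\|X\| \le L$ and $0 < \theta < 3/L$, monotonicity of the scalar function $s \mapsto (e^s - 1 - s)/s^2$ transfers to the operator inequality $\E\,e^{\theta X} \preceq \exp\rbra{g(\theta)\,\E X^2}$ with $g(\theta) = (\theta^2/2)/(1 - \theta L/3)$, hence $\log\E\,e^{\theta\,\mathcal{H}(\S_k)} \preceq g(\theta)\,\E[\mathcal{H}(\S_k)^2]$.

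Combining these via monotonicity of $\tr\exp$ under the positive-semidefinite order and the bound $\tr\exp(\bs M) \le d\,e^{\|\bs M\|}$ for positive-semidefinite $\bs M$ gives
\[
  \E\sbra{\tr\exp(\theta\,\mathcal{H}(\Z))} \le \tr\exp\rbra{g(\theta)\,\textstyle\sum_k\E[\mathcal{H}(\S_k)^2]} \le d\,e^{g(\theta)\,v(\Z)}.
\]
Substituting back and minimizing $-\theta t + g(\theta)\,v(\Z)$ over $\theta \in (0, 3/L)$, which is achieved at $\theta = t/(v(\Z) + Lt/3)$, produces the claimed bound with the prefactor $d = d_1 + d_2$. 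The main obstacle is the subadditivity step: Lieb's concavity theorem --- that $\bs A \mapsto \tr\exp(\bs H + \log\bs A)$ is concave on positive-definite $\bs A$ --- is the deep analytic input, and coupling it with Jensen's inequality to move the expectation inside the trace-exponential is the technical heart of the argument. The remaining pieces --- the dilation identities, the scalar moment estimate, and the final scalar optimization --- are routine.
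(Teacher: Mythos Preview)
Your sketch is correct and follows the standard Tropp argument (dilation, matrix Laplace transform, Lieb's concavity for subadditivity, Bernstein moment bound, scalar optimization). Note, however, that the paper does not supply its own proof of this theorem: it is quoted as a tool from~\cite[Theorem~6.1.1]{tropp2015introduction} and~\cite[Theorem~1.6]{tropp2012user} and used without proof, so there is nothing in the paper to compare against beyond those same references you are reproducing.
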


The remainder of the proof consists of a specialization of this inequality to the special case $\Z = \A - \tilde \A$.
Indeed, let
\begin{equation}\label{eq:Sj-def}
  \S_j = \d_j \e_j^T,
\end{equation}
where $\d_j := [\A-\tilde \A]_j$ denotes the $j$th column of $\A-\tilde \A$ and $\e_j$ denotes the $j$th standard basis vector.
It is clear that $\Z = \A - \tilde \A = \sum_{j=1}^N \S_j$. It remains to estimate  an upper bound $L$ on the spectral norms of the matrices $\{\S_j\}$ and an upper bound on the variance $v(\Z)$. Once we have estimated these quantities, we will substitute them into~\eqref{eq: MBI} to complete the proof.

We begin with the following estimate on $L$, which is an immediate consequence of the standard Bernstein Inequality~\eqref{eq: Bernstein}.

\begin{lemma}\label{lem: L}
  There is a  constant $c_1 > 0$
  such that matrices $\{\S_j\}$ defined by~\eqref{eq:Sj-def} satisfy
  \begin{equation*}
    \|\S_j\| \le L := c_1\sqrt{\max\bra{1, \ts} \log N}
  \end{equation*}
  for all $j=1,2,\dots, N$ with probability at least $ 1 - 2 N^{-5}$.
\end{lemma}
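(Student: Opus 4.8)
The plan is to use the fact that each $\S_j = \d_j\e_j^T$ is a rank-one matrix, so that $\|\S_j\| = \|\d_j\|_2\,\|\e_j\|_2 = \|\d_j\|_2$, and then to bound $\|\d_j\|_2$ after writing it in closed form. From the definition of $\tilde\A$ the $i$th entry of $\d_j = [\A - \tilde\A]_j$ vanishes whenever $A_{ij} = 1$, while for each of the $n - n_j$ indices $i$ with $A_{ij} = -\ts$ one gets the common value
\[
  [\A - \tilde\A]_{ij} = -\ts + \frac{n_j}{n - n_j} = \frac{n_j - pn}{(n - n_j)(1-p)},
\]
using $\ts = p/(1-p)$. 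Hence $\d_j$ has exactly $n - n_j$ equal nonzero entries, and
\[
  \|\d_j\|_2^2 = (n - n_j)\cdot\frac{(n_j - pn)^2}{(n - n_j)^2(1-p)^2} = \frac{(n_j - pn)^2}{(n - n_j)(1-p)^2}.
\]

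First I would control $n_j$, which is a sum of $n$ independent $\{0,1\}$ Bernoulli trials of success probability $p$, via Bernstein's inequality (Lemma~\ref{lem: Bernstein}) applied with $t = N$: with probability at least $1 - 2N^{-6}$,
\[
  |n_j - pn| \le 6\max\bra{\sqrt{\sigma_p^2\, n\log N},\ \log N},\qquad \sigma_p^2 = p(1-p).
\]
On this same event the upper deviation also gives $n - n_j \ge (1-p)n - 6\max\{\sqrt{\sigma_p^2 n\log N},\,\log N\}$; since in the intended applications the relevant dimension $n$ is at least $n_{\min} = \Omega((\log N)^{3/2})$ by the gap assumption~\eqref{eq: dsm gap} and $p$ is bounded away from $1$ (so $\ts = O(1)$), the subtracted term is at most $\frac{1}{2}(1-p)n$, whence $n - n_j \ge \frac{1}{2}(1-p)n$. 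Substituting these bounds into the formula for $\|\d_j\|_2^2$ and simplifying with $\sigma_p^2/(1-p)^2 = \ts$ gives
\[
  \|\S_j\| = \|\d_j\|_2 = O\rbra{\max\bra{\sqrt{\ts\log N},\ \frac{\log N}{\sqrt n\,(1-p)}}} = O\rbra{\sqrt{\max\{1,\ts\}\log N}},
\]
where the last step uses $n = \Omega(\log N)$ to absorb the second term; this yields the claimed $L$ with an absolute constant $c_1$.

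Finally I would union-bound over $j = 1,\dots,N$: each of the $N$ events above fails with probability at most $2N^{-6}$, so all of them hold simultaneously with probability at least $1 - 2N^{-5}$, as claimed. The only point that needs care is the lower bound $n - n_j \ge \frac{1}{2}(1-p)n$ — i.e., ensuring that the denominator $(n - n_j)(1-p)^2$ does not degenerate — which is exactly where the (implicit) hypothesis that the block dimension is large compared with $\max\{1,\ts\}\log N$ enters; the rest is a routine closed-form computation followed by one application of Bernstein's inequality and a union bound.
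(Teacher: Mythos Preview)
Your proposal is correct and follows essentially the same route as the paper: reduce $\|\S_j\|$ to $\|\d_j\|$, write $\|\d_j\|^2 = (n_j - pn)^2 / ((1-p)^2(n-n_j))$, control both numerator and denominator via a single application of Bernstein's inequality~\eqref{eq: Bernstein} together with the gap assumption so that $n - n_j$ is of order $n$, and then union-bound over $j$. Your version is in fact slightly more explicit than the paper's, which leaves the derivation of the closed form for $\|\d_j\|^2$ and the final union bound implicit.
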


\begin{proof}
  Fix $j \in \{1,\dots, N\}$.
  Note that the
    $\|\S_j\| = \|\d_j \e_j^T\| = \|\d_j\|\|\e_j\| = \|\d_j\|$.
  Moreover, the Bernstein Inequality~\eqref{eq: Bernstein}
  implies that
  \begin{align*}
    \|\d_j\|^2 &= \frac{(n_j - pn)^2}{(1-p)^2(n-n_j)}
     \le \frac{36 \max \{p (1-p) n \log N, \log^2 N\}}{(1-p)^2(n - 6 \max\{\sqrt{p(1-p) n \log N}, \log N\})} 
     = O \rbra{ \max\{1, \ts \} \log N }
  \end{align*}
  with probability at least $1 - 2 N^{-6}$,
  where the last inequality follows from the fact that $n - n_j = O(n)$ w.h.p.~(by \eqref{eq: Bernstein}) and $\log N = O(n)$ (by the gap assumption).
  Taking the square root completes the proof. \qed
\end{proof}

We next bound the matrix variance $v(\Z)$.

\begin{lemma}\label{lem:mat-var-bnd}
  The matrix $\Z = \sum_{j} \S_j$ defined by~\eqref{eq:Sj-def} satisfies
    $v(\Z) \le c \ts N$
  for any constant $c > 0$ satisfying $n - n_j > (1/c) n$ for all $j$.
\end{lemma}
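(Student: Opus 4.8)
The plan is to read off the two pieces of the matrix variance $v(\Z)$ in~\eqref{eq: mat var} directly from the rank-one structure of the summands $\S_j = \d_j\e_j^T$ of~\eqref{eq:Sj-def}, thereby reducing the whole estimate to a bound on $\max_j \E\|\d_j\|^2$. First I would record the two matrix products. Since $\e_j^T\e_k = \delta_{jk}$, one has $\S_j\S_k^* = \d_j(\e_j^T\e_k)\d_k^T$, so that
\[
  \Z\Z^* \;=\; \sum_{j,k}\S_j\S_k^* \;=\; \sum_{j,k}\d_j(\e_j^T\e_k)\d_k^T \;=\; \sum_{j=1}^N \d_j\d_j^T,
\]
a sum of rank-one positive semidefinite matrices, while $\S_j^*\S_k = (\d_j^T\d_k)\,\e_j\e_k^T$, so that $\Z^*\Z$ has $(j,k)$-entry $\d_j^T\d_k$.

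Next I would eliminate the off-diagonal of $\E[\Z^*\Z]$. Columns $j$ and $k$ of $\A$ are independent for $j\ne k$, hence so are $\d_j$ and $\d_k$; and $\E[\d_j]=\0$, which I would verify by conditioning on the column sum $n_j$: given $n_j$, the rows where the $j$th column of $\A$ equals $-\ts$ form a uniformly random $(n-n_j)$-subset of $[n]$, so $\E[\d_j\mid n_j] = \tfrac{n_j - pn}{n(1-p)}\,\e$, and averaging over $n_j$ (whose mean is $pn$) gives $\E[\d_j]=\0$. Consequently $\E[\d_j^T\d_k]=\E[\d_j]^T\E[\d_k]=0$ for $j\ne k$, so $\E[\Z^*\Z]=\diag\bbra{\E\|\d_1\|^2,\dots,\E\|\d_N\|^2}$ and $\|\E[\Z^*\Z]\|=\max_j\E\|\d_j\|^2$. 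For the companion term I would use that each $\d_j\d_j^T$ has rank one, so $\|\E[\d_j\d_j^T]\|\le\E\|\d_j\d_j^T\|=\E\|\d_j\|^2$, whence by the triangle inequality $\|\E[\Z\Z^*]\|\le\sum_j\|\E[\d_j\d_j^T]\|\le N\max_j\E\|\d_j\|^2$. Combining the two bounds, $v(\Z)\le N\max_j\E\|\d_j\|^2$.

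It then remains to show $\max_j\E\|\d_j\|^2\le c\,\ts$. For this I would reuse the closed form obtained in the proof of Lemma~\ref{lem: L}, namely $\|\d_j\|^2 = (n_j-pn)^2/\bbra{(1-p)^2(n-n_j)}$, invoke the hypothesis $n-n_j>n/c$ to bound $1/(n-n_j)\le c/n$, and use that $n_j$ is binomial with $\E[(n_j-pn)^2]=\mathrm{Var}(n_j)=np(1-p)$; this yields
\[
  \E\|\d_j\|^2 \le \frac{c}{n(1-p)^2}\,\E[(n_j-pn)^2] = \frac{c\,p(1-p)}{(1-p)^2} = \frac{cp}{1-p} = c\,\ts,
\]
and therefore $v(\Z)\le c\,\ts N$, as claimed.

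The two matrix identities and the final binomial-variance computation are routine; the one step that genuinely needs care is the mean-zero identity $\E[\d_j]=\0$ used to kill the cross terms of $\E[\Z^*\Z]$, which relies on the exchangeability of the entry positions within a column and not on the conditional entries of $\A$ having mean zero given $n_j$. A secondary, minor point --- exactly as already handled in the proof of Lemma~\ref{lem: L} --- is that the substitution $1/(n-n_j)\le c/n$ sits inside an expectation, so formally one works on the high-probability event $\bra{n-n_j>n/c\;\forall j}$ (valid for a suitable constant $c$ by Lemma~\ref{lem: Bernstein}) and checks that the contribution of atypically dense columns, where $\|\d_j\|^2$ is at most the crude deterministic bound $n^2/(1-p)^2$, is negligible; I expect this to be the only obstacle worth spelling out.
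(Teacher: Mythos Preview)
Your proposal is correct and follows essentially the same route as the paper: both reduce $v(\Z)$ to a bound on $\max_j \E\|\d_j\|^2$ via the diagonal structure of $\sum_j \S_j^*\S_j$ and the triangle-plus-Jensen estimate on $\sum_j \E[\d_j\d_j^T]$, and both finish with the identical binomial-variance computation $\E\|\d_j\|^2 \le c\ts$. The only substantive difference is that the paper takes the vanishing of the cross terms $\E[\S_j^*\S_k]$ for granted (it is already baked into the matrix-variance formula~\eqref{eq: mat var} under the mean-zero hypothesis of Theorem~\ref{thm: mat bern}), whereas you explicitly verify $\E[\d_j]=\0$ by the exchangeability/conditioning argument; your version is therefore slightly more self-contained, and your closing remark about the event $\{n-n_j>n/c\}$ sitting inside the expectation flags a technicality the paper silently absorbs into the lemma hypothesis.
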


\begin{proof}
  It suffices to construct upper bounds on each of $\|\E[\Z\Z^T]\|$ and $\|\E[\Z^T \Z]\|$.
  We begin with the latter.
  Note that $\S_j^T \S_j^T = (\d_j^T \d_j) \e_j \e_j^T$. This implies that $\Z^T \Z$ is a diagonal matrix with $j$th diagonal entry equal to $\|\d_j\|^2$.
  It follows that
  $
    \|\E (\Z^T\Z)\| = \max_j \E[ \|\d_j\|^2].
  $
  For each $j=1,2,\dots, N$, we have
  \begin{equation}\label{eq:E-d-bnd}
    \E[\|\d_j\|^2] = \E \sbra{ \frac{(n_j-pn)^2}{(1-p)^2(n-n_j)}}
    \le \frac{c}{(1-p)^2 n} \E[(n_j - pn)^2] = \frac{c p (1-p)n}{(1-p)^2 n} = c \ts,
  \end{equation}
  where the inequality follows from the assumption that $n - n_j \ge (1/c) n$ and the second to last inequality follows from the fact that $\E[(n_j - pn)^2]$ is equal to the variance of the binomial variable $n_j$. This implies that
  \begin{equation}\label{eq:ZTZbnd}
    \|\E[\Z^T\Z]\| \le c \ts.
  \end{equation}

  On the other hand,
  $  \E[\Z\Z^T] = \sum_{j=1}^N \E[\S_j \S_j^T] = \sum_{j=1}^N \E[\d_j \d_j^T]$.
  It follows immediately that
  \begin{equation}\label{eq:out-tri}
    \|\E[\Z\Z^T]\| \le \sum_{j=1}^N \|\E[\d_j\d_j^T]\| \le \sum_{j=1}^N \E [ \|\d_j \d_j^T\|]
    = \sum_{j=1}^N \E[ \|\d_j\|^2]
  \end{equation}
  by the triangle inequality and Jensen's inequality.
  Applying~\eqref{eq:E-d-bnd}, we see that
  \begin{equation}\label{eq:ZZTbnd}
    \|\E[\Z\Z^T]\| \le c N \ts.
  \end{equation}
  Substituting~\eqref{eq:ZTZbnd} and~\eqref{eq:ZZTbnd} into the formula for $v(\Z)$, we see that we have $v(\Z) \le c N \ts$. \qed
\end{proof}

We are now ready to complete the proof of~Lemma~\ref{lem: thm 4}. Let's consider the following cases.
First, suppose that $\ts N \ge \log N$ and let $t = \tilde c \sqrt{\ts N} \log N$ in~\eqref{eq: MBI}. Recall that applying the Bernstein Inequality~\eqref{eq: Bernstein} to each binomial variable $n_j$ implies that there is a  constant $c$ such that $n-n_j \ge (1/c) n$ for all $j$ with probability at least $1 - 2 N^{-5}$.
This implies that we have $v(\Z) \le c \ts N$ with the same probability.
Substituting into~\eqref{eq: MBI}, along with the choice of $L$ from Lemma~\ref{lem: L}, we see that
\begin{align*}
  Pr(\|\Z\| \ge t)
    &\le (N + n) \exp \rbra{ - \frac{ \tilde c^2 \ts N \log^2 N / 2}{c \ts N + \tilde c\sqrt{B} \ts N \log N/3 } }
\end{align*}
using the assumption that $\sqrt{\log N} \le \sqrt{\ts N}$.
Rearranging further, we see that
\begin{align*}
  Pr(\|\Z\| \ge t)
    &\le (N+n) \exp\rbra{ - \frac{\tilde c^2}{c + \tilde c \sqrt{B}} \log N}
    \le (N+n) \exp(-7 \log N) \le 2 N^{-6},
\end{align*}
if we choose $\tilde c$ large enough that $\tilde c^2/(c+\tilde c \sqrt{B}) > 7$ (which is possible if we impose the assumption that $B = O(1)$).

Next, consider the case that $\log N > \ts N$ and let $t = \tilde c \log^{3/2}(N)$. Then the Matrix Bernstein Inequality~\eqref{eq: MBI} implies that
\begin{align*}
  Pr(\|\Z\| \ge t)
  &\le (N + n) \exp \rbra{ \frac {-\tilde c \log^3 N /2 } {c \ts N + \tilde c \sqrt{B} \log^2 N /3} } 
  \le (N+n) \exp (-7\log N) \le 2 N^{-6}
\end{align*}
for any $\tilde c$ satisfying $\tilde c^2/(c+\tilde c \sqrt{B}) > 7$.
Combining the two cases, we see that there are constants $c_1, c_2 >0$ such that
\[
  \|\Z\| = \|\A - \tilde \A\| \le c_1 \max \bra{ \sqrt{\ts N \log N}, \log N} \sqrt{\log N}
\]
with probability at least $1 - c_2 N^{-5}$.
This completes the proof. \qed

\bibliographystyle{spmpsci_unsrt}
\bibliography{bib2}

\end{document}